\definecolor{citegreen}{rgb}{0,0.6,0}
\definecolor{refred}{rgb}{0.8,0,0}
\title{Qualitative and quantitative estimates for minimal hypersurfaces with bounded index and area}
\author{Reto Buzano and Ben Sharp}
\date{}
\theoremstyle{plain}
\newtheorem{lemma}{Lemma}[section]
\newtheorem{theorem}[lemma]{Theorem}
\newtheorem{prop}[lemma]{Proposition}
\newtheorem{cor}[lemma]{Corollary}
\theoremstyle{definition}
\newtheorem{defn}[lemma]{Definition}
\newtheorem{remark}[lemma]{Remark}  
\newcommand{\pl}[2]{{\frac{\partial #1}{\partial #2}}}
\newcommand{\ppl}[3]{{\frac{\partial^2 #1}{\partial #2 \partial #3}}}
\newcommand{\R}{\mathbb{R}}
\newcommand{\N}{\mathbb{N}}
\newcommand{\h}{\mathcal{H}}
\newcommand{\A}{\mathcal{A}}
\newcommand{\ed}{{\rm d}}
\newcommand{\id}{\;\ed}
\newcommand{\D}{{\nabla}}
\newcommand{\eps}{{\varepsilon}}
\newcommand{\emb}{\hookrightarrow}
\newcommand{\cemb}{\subset \subset}
\newcommand{\twopartdef}[4]
{ \left\{\begin{array}{ll}
			#1 & \mbox{if  } #2 \bigskip \\
			#3 & \mbox{if  } #4
\end{array}\right.}
\newcommand{\fourpartdef}[8]
{\left\{\begin{array}{llll}
			#1 & \mbox{if  } #2\bigskip \\
			#3 & \mbox{if  } #4 \bigskip \\
			#5 & \mbox{if  } #6 \bigskip \\
			#7 & \mbox{if  } #8
\end{array}\right.}
\def\blfootnote{\gdef\@thefnmark{}\@footnotetext}
\begin{document}
\maketitle
\blfootnote{\textup{2010} \textit{Mathematics Subject Classification}: 53A10 (primary), 49Q05, 58E12 (secondary)}
\begin{abstract}
We prove qualitative estimates on the total curvature of closed minimal hypersurfaces in closed Riemannian manifolds in terms of their index and area, restricting to the case where the hypersurface has dimension less than seven. In particular, we prove that if we are given a sequence of closed minimal hypersurfaces of bounded area and index, the total curvature along the sequence is quantised in terms of the total curvature of some limit hypersurface, plus a sum of total curvatures of complete properly embedded minimal hypersurfaces in Euclidean space -- all of which are finite. Thus, we obtain qualitative control on the topology of minimal hypersurfaces in terms of index and area as a corollary.    
\end{abstract}

%------------------------------------------------------------------
\section{Introduction and main results}

In this article, we investigate the class $\mathfrak{M}(N)$ of closed, smooth, embedded minimal hypersurfaces $M^n$ in a closed Riemannian manifold $(N^{n+1},g)$. By the work of Almgren-Pitts \cite{A68, P81} and Schoen-Simon \cite{ScS81}, we know that $\mathfrak{M}(N)$ is non-empty when $2 \leq n\leq 6$. In fact, if the ambient manifold $N$ has positive Ricci curvature, $Ric_N > 0$, then $\mathfrak{M}(N)$ contains at least countably many elements due to a recent result of Marques-Neves \cite{MN13} using min-max techniques. Moreover, by construction, their minimal hypersurfaces have bounded Morse index and bounded area and it thus seems natural to study the properties of subsets of $\mathfrak{M}(N)$ satisfying these two conditions.

There are many known relationships between the index of minimal submanifolds and their topological and analytic properties. In particular, given an immersed minimal submanifold in a closed Riemannian manifold, one knows that the Morse index is bounded linearly from above in terms of area and total curvature, see Ejiri-Micallef \cite{EM08} for $n=2$ and Cheng-Tysk \cite{CT94} for $n\geq 3$. Moreover, for properly immersed minimal hypersurfaces in Euclidean space, we have the same result -- except this time the upper bound is purely in terms of the total curvature (and thus implicitly contains information on the number of ends the minimal hypersurface has). 

Unlike the upper bound case, the lower bound case is heavily dependant on codimension: One would never expect index and volume growth to control the topology in the case of complex projective sub-varieties of $\mathbb{C}^2$ (which are calibrated and therefore local minima for area). However, for hypersurfaces, this seems more conceivable, and indeed there are several results in the case where the ambient manifold is Euclidean space. In particular, one knows that an embedded minimal hypersurface in Euclidean space with finite index and Euclidean volume growth has finite total curvature by results of Fischer-Colbrie \cite{FC85} for $n=2$ (see also Chodosh-Maximo \cite{CM14}) and Tysk \cite{T89} for $n\geq 3$ -- though in two dimensions one does not need to assume Euclidean volume growth. Moreover, Li-Wang \cite{LW02} proved that a minimal hypersurface in Euclidean space with finite index has finite first Betti number. For minimal hypersurfaces in closed manifolds, similar results are currently only known in a few special cases, for example Urbano \cite{U90} proved that when $N$ is a three-sphere and $M$ is compact, orientable, and not totally geodesic, then the Morse index of $M$ is at least $5$. However, in the cases where lower bounds for the index are known, they can be extremely useful; the above mentioned result by Urbano for example plays a crucial role in the resolution of the Willmore conjecture by Marques-Neves \cite{MN14}.

The purpose of this article is to give \emph{qualitative lower bounds} on index and area in terms of total curvature for embedded minimal hypersurfaces in a closed Riemannian manifold $N$. As a corollary, we then obtain that the index and area give a qualitative upper bound on the topology of embedded minimal hypersurfaces. This last result (see Corollary \ref{cor:Betti} below) has recently also been proved by Chodosh-Ketover-Maximo \cite{CKM15} when $n=2$, using different techniques (which yield finer results in two dimensions). Furthermore the methods in \cite{CKM15} also yield a topological bound when $3\leq n \leq 6$ -- however they do not obtain any analytical control for example on the total curvature.   

In order to state our main results, let us first set up the necessary notation. For an embedded and closed minimal hypersurface $M^n\emb (N^{n+1},g)$ in a closed Riemannian manifold $N$ with $n\leq 6$, we define the total curvature by
\begin{equation}
\A(M) := \int_M |A|^n \id V_M,
\end{equation}
where $A$ denotes the second fundamental form of the embedding. We define the Morse index of $M$, denoted $index(M)$, as the number of negative eigenvalues associated with the Jacobi (second variation) operator for minimal hypersurfaces $M\subset N$: 
\begin{equation}
Q(v,v):=\int_{M}\Big( |\D^\bot v|^2 - |A|^2|v|^2 -Ric_N(v,v) \Big) \id V_M,
\end{equation} 
where $v\in\Gamma({\rm{N}}M)$ is a section of the normal bundle, $\D^\bot$ is the normal connection, and of course $Ric_N$ is the Ricci curvature of $N$. Moreover, we denote the $p^{th}$ eigenvalue of the Jacobi operator by $\lambda_p$. We then define 
\begin{equation}
\mathcal{M}(\Lambda,I) := \{ M \in \mathfrak{M}(N) \mid \h^n(M) \leq \Lambda,\ index(M) \leq I\},
\end{equation}
the set of closed, smooth and embedded minimal hypersurfaces with area and Morse index bounded from above (where by area we mean $\h^n(M)$, the $n$-dimensional Hausdorff measure of $M$). 

Similarly,
\begin{equation}
\mathcal{M}_p(\Lambda,\mu) := \{ M \in \mathfrak{M}(N) \mid \h^n(M) \leq \Lambda,\ \lambda_p \geq -\mu\}
\end{equation}
is the set of elements in $\mathfrak{M}(N)$ with bounded area and $p^{th}$ eigenvalue $\lambda_p$ of the Jacobi operator bounded from below.

In \cite{Sha15}, the second author proves a compactness theorem for the set $\mathcal{M}(\Lambda,I)$, and in later joint work Ambozio-Carlotto-Sharp \cite{ACS15} also prove a compactness theorem for $\mathcal{M}_p(\Lambda,\mu)$. In particular, we know that given a sequence of minimal hypersurfaces in $\mathcal{M}(\Lambda,I)$ (or $\mathcal{M}_p(\Lambda,\mu)$), there is some smooth  limit in the same class and our sequence sub-converges smoothly and graphically (with finite multiplicity $m$) away from a discrete set $\mathcal{Y}$ on the limit. In the present paper, we study more carefully what is happening at this discrete set. Using a localised version of the aforementioned compactness result, Theorem \ref{thm:oldmain} below, it is possible to re-scale at appropriately chosen levels so that the re-scaled hypersurfaces converge to some limit in Euclidean space. However, this limit will have finite index and Euclidean volume growth, at which point the result of Tysk \cite{T89} gives us a total curvature bound. We will show that we can perform a bubbling argument that captures all of the total curvature along the sequence. Of course, in order to do so, we have to worry about the intermediate (or so called ``neck'') domains. By construction, these neck domains will be graphical with bounded slope and small dyadic total curvatures, at which point we carry out a detailed analysis of their structure and check that we cannot lose any total curvature in these regions -- see section \ref{NA}. Thus, all of the total curvature is captured by the ``bubbles'' and the limit only, and we obtain the following main result.

\begin{theorem}\label{thm:main}
Let $2\leq n \leq 6$ and $N^{n+1}$ be a smooth closed Riemannian manifold. If $\{M_k^n\}\subset \mathcal{M}(\Lambda,I)$ for some fixed constants $\Lambda\in \R$, $I\in \N$ independent of $k$, then up to subsequence, there exists $M\in \mathcal{M}(\Lambda,I)$ and $m\in \mathbb{N}$ where $M_k \to m M$ in the varifold sense. There exist at most $I$ points $\mathcal{Y} = \{y_i\}\subset M$ where the convergence to $M$ is smooth and graphical (with multiplicity $m$) away from $\mathcal{Y}$.

Moreover, associated with each $y \in \mathcal{Y}$ there exists a finite number $0<J_y\in \mathbb{N}$ of properly embedded minimal hypersurfaces $\{\Sigma_\ell^y\}_{\ell=1}^{J_y} \emb \R^{n+1}$ with finite total curvature for which 
\begin{equation*}
\lim_{k\to \infty} \A(M_k) = m\A(M) + \sum_{y\in \mathcal{Y}}\sum_{\ell=1}^{J_y} \A(\Sigma^y_\ell),
\end{equation*}
where $\sum_y J_y \leq I$. Furthermore, when $k$ is sufficiently large, the $M_k$'s are all diffeomorphic to one another. 
\end{theorem}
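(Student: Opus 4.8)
The plan is to combine the (global) compactness theorem for $\mathcal{M}(\Lambda,I)$ with a point-selection/blow-up scheme at the concentration points, Tysk's finiteness theorem in Euclidean space, and a neck decay estimate. First I would invoke the compactness theorem to pass to a subsequence with $M_k\to M\in\mathcal{M}(\Lambda,I)$ in the varifold sense, concentration set $\mathcal{Y}=\{y_i\}$ with $|\mathcal{Y}|\le I$, multiplicity $m$, and smooth graphical convergence on $N\setminus\mathcal{Y}$. Since $M$ is a smooth closed hypersurface, $\A(M)<\infty$; after a further subsequence we may assume $\A(M_k)\to\mathcal{A}_\infty\in[0,\infty]$, and the content of the theorem is that $\mathcal{A}_\infty$ is finite and has the stated form. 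Splitting $M_k$ into the part outside small balls $B_\sigma(y_i)$ and the part inside them, smooth graphical convergence gives $\int_{M_k\setminus\bigcup_i B_\sigma(y_i)}|A_{M_k}|^n\to m\int_{M\setminus\bigcup_i B_\sigma(y_i)}|A_M|^n$, which tends to $m\A(M)$ as $\sigma\to 0$. So everything reduces to analysing, for each $y_i$ separately, the quantity $\lim_{\sigma\to 0}\lim_k\int_{M_k\cap B_\sigma(y_i)}|A_{M_k}|^n$.

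Fixing $y_i$, I would run a point-selection at the scale of maximal curvature concentration: choose $x_k\to y_i$ and $r_k\to 0$ realising the concentration in a scale-invariant sense, and set $\tilde M_k:=r_k^{-1}(M_k-x_k)$, a minimal hypersurface for the rescaled metric $g_k:=r_k^{-2}\phi_{x_k}^*g$. As $r_k\to 0$ one has $g_k\to g_{\mathrm{eucl}}$ in $C^\infty_{loc}$ and $|\mathrm{Rm}_{g_k}|\to 0$, so the Ricci term in the Jacobi form disappears in the limit; the index is scale invariant and bounded by $I$, and the density ratios stay bounded by monotonicity and the area bound $\Lambda$. Applying the localised compactness result (Theorem \ref{thm:oldmain}) I obtain a properly embedded minimal hypersurface $\Sigma_1\emb\R^{n+1}$ which, by the choice of scale, is non-flat, has $index(\Sigma_1)\le I$, and has Euclidean volume growth; Tysk's theorem then gives $\A(\Sigma_1)<\infty$. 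Iterating — removing large fixed balls containing $\Sigma_1$ from the rescaled pictures and passing to the remaining concentration points and scales — produces further bubbles $\Sigma_2,\Sigma_3,\dots$ at mutually separated scales. The process terminates after at most $I$ steps: a complete embedded minimal hypersurface in $\R^{n+1}$ with $n\le 6$, Euclidean volume growth and zero index is a hyperplane (Bernstein theorem for stable hypersurfaces), so each non-flat bubble carries index at least one, and since the bubbles live at separated scales their negative directions can be realised on disjoint test sections, whence $\sum_\ell index(\Sigma_\ell)\le I$ and in particular $J\le I$.

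It then remains to show that no total curvature is lost in the \emph{neck regions} joining consecutive bubble scales (and joining the outermost bubble to $M$). By construction of the point-selection, each such neck is, on every dyadic sub-annulus, an (at most $m$-sheeted) graph of bounded slope over a hyperplane with small scale-invariant total curvature $\int|A|^n\le\eps_0$. For a minimal graph of bounded slope the graph equation is uniformly elliptic, so differentiating the equation and using De Giorgi--Nash together with the $L^n$-smallness of $|A|$ upgrades this to smallness of $\sup(|x|\,|A|)$ on the middle of each dyadic annulus; a minimal graph with $C^1$-small curvature on an annulus is $C^1$-close to an affine hyperplane (which has vanishing curvature), and this yields a three-annulus/decay lemma: the dyadic total curvature decays geometrically toward the interior of the neck from either end. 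Summing the geometric series, the total curvature over the entire neck is bounded by a fixed multiple of its value on the two extreme annuli, which can be made as small as we like because those annuli sit inside regions already accounted for by $M$ and the bubbles. Hence $\lim_{\sigma\to0}\lim_k\int_{M_k\cap B_\sigma(y_i)}|A_{M_k}|^n=\sum_{\ell\ \mathrm{at}\ y_i}\A(\Sigma_\ell)$, and summing over $i$ gives $\mathcal{A}_\infty=m\A(M)+\sum_{\ell=1}^J\A(\Sigma_\ell)$, all terms finite. Finally, for $k$ large the bubbling configuration is combinatorially fixed — $M_k$ is diffeomorphic to $M$ with small disks near $\mathcal{Y}$ removed and the compact cores of the $\Sigma_\ell$ glued in along cylindrical (graphical) necks — and since this data stabilises, all $M_k$ with $k$ large are mutually diffeomorphic.

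\textbf{Main obstacle.} The delicate step is the neck analysis: setting up the point-selection so that the necks genuinely enjoy the ``bounded slope plus small dyadic curvature'' property across \emph{all} intermediate scales (so that every curvature-carrying region, e.g.\ a catenoidal waist, is blown up as a bubble rather than hidden in a neck), dealing with possibly $m$-sheeted necks, and proving the decay lemma that prevents $L^n$-curvature from escaping into the neck. Organising the nested multi-scale bubbling and proving that it terminates via the index accounting is the secondary difficulty.
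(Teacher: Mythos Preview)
Your overall architecture is correct and matches the paper's: compactness $\Rightarrow$ concentration set $\mathcal{Y}$, bubble extraction at each $y_i$ with termination via the index bound, neck analysis to show no curvature escapes, and a stabilised topological decomposition for the diffeomorphism statement. The implementation, however, differs from the paper's in two places.

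\textbf{Point selection.} You select scales by maximal curvature; the paper instead selects by \emph{smallest unstable ball}: $r_k^1=\inf\{r:\ M_k\cap B_r(p)\ \text{unstable for some }p\}$, and iterates over an admissible class of balls whose instability is witnessed in the complement of the previously chosen regions. This buys two things for free. First, termination is immediate: each chosen ball contributes a disjoint region of positive index, so at most $I$ iterations. Second, \emph{every} ball of radius $<r_k^\ell$ (in the complement of earlier bubbles) is stable, so Schoen--Simon gives $|A|\le C/\varrho$ there; this is exactly what makes the blow-ups converge smoothly with multiplicity one away from the origin and what forces intermediate-scale blow-ups to be planar. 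Your curvature-based scheme can be made to work, but the step ``bubbles at separated scales give disjoint negative directions, hence $\sum index(\Sigma_\ell)\le I$'' and the step ``any non-planar intermediate blow-up would contradict the index bound'' both need the kind of careful disjointness bookkeeping that the paper builds into the definition of $\mathfrak{C}^\ell_k$.

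\textbf{Neck analysis.} You propose De~Giorgi--Nash plus a three-annulus decay lemma to sum the dyadic contributions. The paper does something more direct: having established (via the index-based scheme) that on the neck $M_k$ is an $(\eta,L)$-graph with $\eta\to 0$, it writes the sheet as $\hat u_k$ with $|\nabla u_k|\le\eta$, uses the minimal-immersion equation to get $|\Delta_{\hat g_k}u_k|\le C\eta^2|\hat u_k|$, and applies the global Calder\'on--Zygmund estimate $\|\nabla^2(\phi_k u_k)\|_{L^n}\le C\|\Delta(\phi_k u_k)\|_{L^n}$ on the whole annulus. After absorption this reduces the neck curvature to boundary terms involving $|\nabla\phi_k|$, $|\nabla^2\phi_k|\,|u_k|$ on the inner and outer shells; these are controlled by Schoen's description of the ends of $\Sigma_\ell$ (giving $|u_k|\lesssim r_k^j\log R$ when $n=2$, $|u_k|\lesssim r_k^j$ when $n\ge 3$) and by smooth convergence to a plane on the outer shell. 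No iterated decay is needed --- the $L^n$ estimate handles the whole neck at once. Your three-annulus route is plausible but you would have to actually prove the decay lemma for almost-flat minimal graphs in an almost-Euclidean metric, which is extra work the paper sidesteps.
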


\begin{remark}
All conclusions of Theorem \ref{thm:main} remain valid if $\{M_k^n\}$ is a sequence of minimal hypersurfaces in $N$ with bounded area and bounded total curvature (as these assumptions imply bounded index by \cite{EM08, CT94}). In particular, the curvature quantisation result obtained here is also new in this setting. For the case $n=2$, one can consult papers of Brian White \cite{W15} and Antonio Ros \cite{R95} to see similar results for sequences of bounded total curvature and area. 

We also point out (we leave the details as an exercise), that it is possible to prove the following when $n=2$ using the Gauss equations and the Gauss-Bonnet formula: with the set-up as in the theorem above, for $k$ sufficiently large (regardless of whether $M_k$ or $M$ are orientable), we have
$$\chi(M_k) = m\chi(M) + \frac{1}{2\pi}\sum_y\sum_{\ell =1}^{J_y} \int_{\Sigma^y_\ell} K^y_{\ell} \,\,\ed V_{\Sigma^y_{\ell}}.$$
This tells us exactly how the genus is dropping depending on the multiplicity $m$, the topology of the limit, and the topology and number of ends of the bubbles. Thus we could even replace total curvature with Gauss curvature when $n=2$. The point being that (up to a factor of two) the Gauss curvature equals $|A|^2$ up to an ambient sectional curvature term, but since the $M_k$'s are converging as varifolds to $V= m M$ then these ambient terms cancel each-other out in the limit. 

Furthermore, when $n=2$ we have that $$\A(\Sigma^y_\ell) =-2\int_{\Sigma^y_\ell} K^y_\ell = 8\pi s$$ for some $s\in \mathbb{N}$. Therefore if there exists some $\delta >0$ such that $\A(M_k) \leq 8\pi - \delta $ then $\mathcal{Y} =\emptyset$ and the convergence to $M$ is smooth and graphical everywhere (possibly with multiplicity).  
\end{remark}

An easy corollary of Theorem \ref{thm:main} gives us a qualitative uniform upper bound on topology and the total curvature in terms of index and area: 

\begin{cor}\label{cor:Betti}
Let $2\leq n \leq 6$ and $N^{n+1}$ be a smooth closed Riemannian manifold. Then, given constants $\Lambda\in \R$ and $I\in \N$, there exists some $C=C(N, \Lambda, I)$ such that for all $M\in \mathcal{M}(\Lambda, I)$ and integers $1\leq i \leq n$ 
\begin{equation*}
\A(M) + b_i(M)\leq C,
\end{equation*}
where $b_i(M)$ is the $i^{th}$ Betti number of $M$. Moreover, $\mathcal{M}(\Lambda, I)$ contains only finitely many elements up to diffeomorphism. Finally, given a sequence $\{M_k\}\subset \mathfrak{M}(N)$, we conclude
\begin{enumerate}
\item if $\h^n(M_k)\leq \Lambda$ and either $\A(M_k)\to \infty$ or $b_i(M_k)\to \infty$ for some $i$, then we must have $index(M_k)\to \infty$,
\item if $index(M_k)\leq I$ and either $\A(M_k)\to \infty$ or $b_i(M_k)\to \infty$ for some $i$, then we must have $\h^n(M_k)\to \infty$. 
\end{enumerate}
\end{cor}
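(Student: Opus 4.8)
The plan is to deduce every assertion from Theorem \ref{thm:main} by a compactness-and-contradiction argument, using that the Betti numbers $b_i$ are diffeomorphism invariants. First, to obtain the uniform bound $\A(M) + b_i(M) \leq C(N,\Lambda,I)$, suppose no such constant exists; then there is a sequence $\{M_k\} \subset \mathcal{M}(\Lambda,I)$ with $\A(M_k) + b_i(M_k) \to \infty$. Applying Theorem \ref{thm:main} to this sequence, we may pass to a subsequence so that $\A(M_k) \to m\A(M) + \sum_{\ell=1}^J \A(\Sigma_\ell)$, and this limit is finite since $M \in \mathcal{M}(\Lambda,I)$ is a fixed smooth closed hypersurface and each $\Sigma_\ell$ has finite total curvature. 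Hence $\A(M_k)$ is bounded along the subsequence, which forces $b_i(M_k) \to \infty$. But Theorem \ref{thm:main} also gives that the $M_k$ are all diffeomorphic to one another for $k$ large, and $b_i$ is a diffeomorphism invariant, so $b_i(M_k)$ is eventually constant -- a contradiction. The same reasoning, using only the final sentence of Theorem \ref{thm:main}, shows that $\mathcal{M}(\Lambda,I)$ contains finitely many elements up to diffeomorphism: otherwise choose pairwise non-diffeomorphic representatives $M_k \in \mathcal{M}(\Lambda,I)$, and Theorem \ref{thm:main} forces a subsequence to be mutually diffeomorphic for $k$ large, a contradiction.

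For the two dichotomies I would again argue by contradiction and reduce to the uniform bound just established. For (1), assume $\h^n(M_k) \leq \Lambda$ and $b_i(M_k) \to \infty$ but $index(M_k) \not\to \infty$; then along a subsequence $index(M_k) \leq I$ for some fixed $I \in \N$ (an integer bound, since the index is a non-negative integer), so this subsequence lies in $\mathcal{M}(\Lambda,I)$ and the uniform bound yields $\sup_k b_i(M_k) < \infty$, contradicting $b_i(M_k) \to \infty$. Statement (2) is entirely symmetric: if $index(M_k) \leq I$ and $b_i(M_k) \to \infty$ but $\h^n(M_k) \not\to \infty$, pass to a subsequence with $\h^n(M_k) \leq \Lambda$, so that this subsequence lies in $\mathcal{M}(\Lambda,I)$, and derive the same contradiction.

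There is essentially no genuine obstacle here beyond Theorem \ref{thm:main} itself; the only point requiring a small amount of care is that $b_i$ should be interpreted as the Betti number over a field (equivalently, the rank of $H_i(\,\cdot\,;\mathbb{Z})$), so that it is a homotopy, hence diffeomorphism, invariant and the eventual-diffeomorphism conclusion of Theorem \ref{thm:main} indeed pins down $b_i(M_k)$ for large $k$.
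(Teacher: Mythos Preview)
Your argument is correct, but it follows a genuinely different route from the paper's own proof. You obtain the Betti number bound purely by compactness and contradiction, using only the ``eventually diffeomorphic'' clause of Theorem~\ref{thm:main} together with the diffeomorphism invariance of $b_i$. The paper instead derives an \emph{explicit} inequality
\[
b_i(M)\leq C(N)\big(\mathcal{H}^n(M)+\mathcal{A}(M)\big)
\]
via Savo's Bochner formula for the Hodge Laplacian on $p$-forms combined with the Cheng--Tysk eigenvalue estimate, and then feeds in the total curvature bound coming from Theorem~\ref{thm:main}. (The paper does remark in passing that one ``could'' argue as you do, via finiteness of diffeomorphism types, but opts for the direct analytic estimate.) Your approach is more elementary---it avoids the Bochner machinery entirely---and is perfectly adequate for the qualitative statement of the corollary; the paper's approach buys a quantitative dependence of $b_i$ on area and total curvature that is of independent interest. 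Your treatment of the finiteness of diffeomorphism types and of the two dichotomies (1) and (2) matches what the paper calls ``immediate from Theorem~\ref{thm:main}''.
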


In particular, Corollary \ref{cor:Betti} and the results from \cite{EM08, CT94} show that in the case of bounded area, a bound on the Morse index is (qualitatively) \emph{equivalent} to a bound on the total curvature.

We expect that under a positive curvature assumption on $N$ one should also obtain \emph{a-priori} area estimates assuming an index bound. In particular, the index should control both the area and the topology in this case. When $n=2$, this has been proved  in \cite{CKM15} (using two-dimensional methods) under the very weak assumption of positive scalar curvature of $N$, $R_N >0$. Thus under the restriction that $Ric_N >0$ and $n=2$, bounds on the index and on the topology are (qualitatively) equivalent independent of an area bound by the compactness theorem of Choi-Schoen \cite{CSc85}. 

We will prove Theorem \ref{thm:main} using a bubbling argument using regions with positive index. However, these regions will also satisfy $\lambda_1 \to - \infty$ (see Corollary \ref{nontriv}). This observation allows us to conclude the following alternative to the main theorem. 

\begin{theorem}\label{cor:lambda}
Let $2\leq n \leq 6$ and $N^{n+1}$ be a smooth closed Riemannian manifold. If $\{M_k^n\}\subset \mathcal{M}_p(\Lambda,\mu)$ for some fixed constants $p\in\N$ and $\Lambda,\mu \in \R$, independent of $k$, then all the conclusions of Theorem \ref{thm:main} still hold true except this time we have $|\mathcal{Y}|, \sum_y J_y\leq p-1$.
\end{theorem}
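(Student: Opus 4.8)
The plan is to reduce Theorem~\ref{cor:lambda} to Theorem~\ref{thm:main} by showing that the bubbling/rescaling machinery developed for index-bounded sequences applies verbatim to sequences in $\mathcal{M}_p(\Lambda,\mu)$. The key point is that the only place an index bound was genuinely used is in controlling the number of ``bad'' scales and in guaranteeing that the Euclidean limits $\Sigma_\ell$ have finite index (hence finite total curvature by Tysk \cite{T89}), and that the neck regions carry negligible dyadic total curvature. So the first step is to extract from $\{M_k\}\subset\mathcal{M}_p(\Lambda,\mu)$ the varifold limit $M$ and the singular set $\mathcal{Y}$ using the compactness theorem of Ambrozio-Carlotto-Sharp \cite{ACS15} (the analogue of \cite{Sha15}) together with the localised compactness result Theorem~\ref{thm:oldmain}; this gives smooth graphical convergence with multiplicity $m$ away from at most finitely many points.

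The second step — and the crux of the matter — is the replacement for the index-counting argument. When we rescale around a concentration point $y_i$ at the appropriate dyadic scale, we obtain in the blow-up limit a complete properly embedded minimal hypersurface $\Sigma_\ell\emb\R^{n+1}$. For the argument to close we need: (a) each such $\Sigma_\ell$ is nontrivial and has finite index, and (b) the total number $J$ of bubbles is finite and bounded (here by some constant depending on $p,\mu$ rather than by $I$). For (a), the Jacobi form on large balls in $M_k$, after rescaling, converges to the Jacobi form on $\Sigma_\ell$ (the ambient Ricci term scales away), and one must show that $\lambda_p(M_k)\geq-\mu$ forces $\Sigma_\ell$ to have index at most $p-1$: if $\Sigma_\ell$ had $\geq p$ orthonormal test sections with strictly negative Rayleigh quotient, these could be transplanted (via the graphical convergence and a cut-off) to $M_k$ on disjoint small balls, producing a $p$-dimensional subspace on which $Q$ is negative, and after rescaling back the eigenvalues would tend to $-\infty$, contradicting $\lambda_p\geq-\mu$. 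This is precisely the content alluded to in Corollary~\ref{nontriv}: the bubble domains satisfy $\lambda_1\to-\infty$. For (b), one runs the same dyadic stopping-time/iteration scheme as in the proof of Theorem~\ref{thm:main}, observing that each genuine bubble consumes at least one ``unit'' of the $p^{th}$-eigenvalue budget, so at most $p$ (or $C(p,\mu)$) bubbles can form; alternatively, since each bubble has index $\geq1$ and the bubbles live on asymptotically disjoint regions, a bound of $J\leq p$ follows directly.

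The third step is the neck analysis, which is identical to the proof of Theorem~\ref{thm:main}: on the intermediate annular regions the convergence is graphical with small slope and small dyadic total curvature, so the standard computation shows no total curvature escapes into the necks; combined with the no-loss statement at the bubble scales and the multiplicity-$m$ convergence on the limit, one gets
\begin{equation*}
\lim_{k\to\infty}\A(M_k)=m\A(M)+\sum_{\ell=1}^J\A(\Sigma_\ell),
\end{equation*}
with each $\A(\Sigma_\ell)<\infty$ by \cite{T89} applied to the finite-index Euclidean limit. The diffeomorphism-type statement for large $k$ then follows exactly as before, by gluing the limit piece and the finitely many bubbles along the (topologically trivial) necks.

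The main obstacle I expect is step two, specifically the transplantation argument that converts a lower bound on a single eigenvalue $\lambda_p$ into the required control on the blow-up limits. With an index bound, disjointness of negative-directions on separate bubble regions is essentially automatic (indices add); with only a $\lambda_p$ bound one has to be more careful, because $\lambda_p\geq-\mu$ is a strictly weaker hypothesis than $index\leq p$. The resolution — as the authors signal via Corollary~\ref{nontriv} — is that the relevant bubble regions are not merely of positive index but in fact have $\lambda_1\to-\infty$, so each contributes an eigenvalue diverging to $-\infty$; stacking $p$ such regions (disjointly supported for large $k$) forces $\lambda_p\to-\infty$ as well, which is the contradiction. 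Making the ``disjointly supported, and the resulting direct sum genuinely lowers $\lambda_p$'' step rigorous, including the cut-off error estimates on the graphical necks, is where the real work lies.
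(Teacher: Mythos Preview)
Your proposal is correct and follows essentially the same route as the paper: use the compactness result of \cite{ACS15} to get $M$ and $\mathcal{Y}$, run the identical point-scale extraction, and argue that the process terminates after at most $p-1$ steps because each extracted region (including the ``ignored'' ones) satisfies $\lambda_1\to-\infty$ via Corollary~\ref{nontriv}, so $p$ disjoint such regions would force $\lambda_p\to-\infty$; the neck analysis is then unchanged. Two small simplifications relative to your write-up: your concern (a) about the finite index of each $\Sigma_\ell$ requires no separate transplantation argument since it is already built into Corollary~\ref{cor:Rn}, and your final worry about cut-off errors for disjointness is unfounded --- the regions $B_{2r^i_k}(p^i_k)\setminus U^{i-1}_k$ are disjoint by construction, so the min-max characterisation of $\lambda_p$ (cf.\ \cite[Lemma~3.1]{ACS15}) applies directly with no error terms.
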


Combining this with the index estimates of Cheng-Tysk \cite{CT94}, we see that a bound on the area and $\lambda_p$ implies a bound on the index. That is, we have the following corollary.
 
\begin{cor}\label{cor:equivalence}
Given $p\in \N$ and $\Lambda, \mu \in \R$, then there exists $I=I(p,\Lambda, \mu, N) \in \N$ such that
\begin{equation*}
\mathcal{M}_p(\Lambda,\mu) \subseteq \mathcal{M}(\Lambda,I).
\end{equation*}
Furthermore, given a sequence $\{M_k\}\subset \mathfrak{M}(N)$ with $\h^n(M_k)\leq \Lambda$ then if $index(M_k)\to \infty$ then $\lambda_p(M_k)\to -\infty$ for all $p$. In particular, the eigenvalues cannot concentrate in any compact region of the real line.
\end{cor}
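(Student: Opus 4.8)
The plan is to deduce Corollary \ref{cor:equivalence} from Theorem \ref{cor:lambda} and the index estimates of Cheng-Tysk \cite{CT94} (together with Ejiri-Micallef \cite{EM08} when $n=2$); no new analysis is required, the only point being to upgrade the sequential conclusion of Theorem \ref{cor:lambda} to a bound uniform over the whole class $\mathcal{M}_p(\Lambda,\mu)$.

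\emph{Uniform total curvature, then uniform index.} First I would show $\Lambda' := \sup\{\A(M):M\in\mathcal{M}_p(\Lambda,\mu)\}$ is finite: if not, choose $M_k\in\mathcal{M}_p(\Lambda,\mu)$ with $\A(M_k)\to\infty$; by Theorem \ref{cor:lambda} (which transfers the conclusions of Theorem \ref{thm:main} to this setting), a subsequence satisfies $\A(M_k)\to m\A(M)+\sum_{\ell=1}^J\A(\Sigma_\ell)$, and the right-hand side is finite since the varifold limit $M$ is a smooth closed minimal hypersurface (so $\A(M)<\infty$), there are only finitely many bubbles $\Sigma_\ell$, and each has finite total curvature -- contradicting $\A(M_k)\to\infty$. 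Next I would invoke \cite{CT94} (resp.\ \cite{EM08} when $n=2$): the Morse index of a closed minimal hypersurface $M^n\emb N^{n+1}$ is bounded linearly in terms of $\h^n(M)$ and $\A(M)$, with constants depending only on $n$ and the geometry of $N$; in the non-orientable case one applies this to the orientable double cover, which has area at most $2\Lambda$, total curvature at most $2\Lambda'$, and Jacobi index equal to $index(M)$ by definition. Combined with the previous step this gives $index(M)\le I$ for some $I=I(p,\Lambda,\mu,N)\in\N$, i.e.\ $\mathcal{M}_p(\Lambda,\mu)\subseteq\mathcal{M}(\Lambda,I)$.

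\emph{The dichotomy and non-concentration.} For the last part, let $\{M_k\}\subset\mathfrak{M}(N)$ with $\h^n(M_k)\le\Lambda$ and $index(M_k)\to\infty$, and fix $p$. If $\lambda_p(M_k)$ did not tend to $-\infty$, then along a subsequence $\lambda_p(M_k)\ge-\mu$ for some finite $\mu$, so $M_k\in\mathcal{M}_p(\Lambda,\mu)$ there and hence $index(M_k)\le I(p,\Lambda,\mu,N)$ by the above, contradicting $index(M_k)\to\infty$; thus $\lambda_p(M_k)\to-\infty$ for every $p$. Equivalently, along a bounded-area sequence a lower bound on any single eigenvalue $\lambda_p$ forces a bound on the index, so as the index blows up each eigenvalue leaves every compact subinterval of $\R$, which is the claimed non-concentration. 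I expect no serious obstacle: all of the genuine difficulty is already contained in Theorem \ref{cor:lambda}, and the only points needing (minor) care are the passage from subsequential convergence to the uniform bound (a routine contradiction argument) and the standard reduction to the orientable double cover in the Cheng-Tysk estimate.
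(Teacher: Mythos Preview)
Your proposal is correct and follows essentially the same approach as the paper: obtain a uniform total curvature bound on $\mathcal{M}_p(\Lambda,\mu)$ from Theorem \ref{cor:lambda}, then apply the Cheng--Tysk/Ejiri--Micallef index estimates. The paper's proof is a two-line sketch to this effect; your contradiction argument (to pass from the sequential statement of Theorem \ref{cor:lambda} to a uniform bound) and your treatment of the non-orientable case and the final dichotomy simply fill in details the paper leaves implicit.
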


\begin{remark}
The converse of Corollary \ref{cor:equivalence} is trivially true -- it should be clear that $\mathcal{M}(\Lambda, I)\subset \mathcal{M}_{I+1}(\Lambda, 0)$. Hence, under an area bound, the condition that $\lambda_p$ is bounded away from $-\infty$ for some $p\in\N$ is \emph{equivalent} to a bound on the index (and also equivalent to a bound on the total curvature).
\end{remark}

\textbf{Acknowledgements.} Both authors would like to thank Andr\'e Neves for many useful discussions. We would also like to thank Otis Chodosh for his interest in our work, and in pointing out a mistake in an earlier draft. The project started whilst BS was supported by Professor Neves' European Research Council StG agreement number P34897, The Leverhulme Trust and the EPSRC Programme Grant entitled ``Singularities of Geometric Partial Differential Equations''™ reference number EP/K00865X/1. Since 1st October 2015, BS has held a junior visiting position at the Centro di Ricerca Matematica Ennio De Giorgi and would like to thank the centre for its support and hospitality. RB would like to thank the EPSRC for partially funding his research under grant number EP/M011224/1.

%------------------------------------------------------------------
\section{Localised compactness theory}\label{section:localcomp}

The goal of this section is to localise the compactness theory developed by the second author in \cite{Sha15}, in fact more precisely we write down a local version of the theory developed in \cite{ACS15}. But before stating our local compactness theorem, let us recall two important results from Schoen-Simon \cite{ScS81} which we will often use throughout this article. The first is a regularity theorem for \emph{stable} minimal hypersurfaces.

\begin{prop}[{\cite[Corollary 1]{ScS81}}]\label{ssreg}
Suppose $M\subset N$ is minimal and embedded. Let $p\in M$ and $\varrho > 0$ with $\h^n(M\cap B^N_{\varrho}(p))<\infty$, and suppose that $M$ is stable in $B^N_{\varrho}(p)$ with respect to the area functional. If $n\leq 6$ and $\varrho^{-n}\h^n(M\cap B^N_{\varrho}(p)) \leq \mu < \infty$ then there exists $C=C(N,\mu)<\infty$, such that
\begin{equation*}
\sup_{B^N_{\varrho/2}(p)} |A|\leq \frac{C}{\varrho}.
\end{equation*}
\end{prop}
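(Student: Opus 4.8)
The plan is to argue by contradiction via a blow-up and compactness argument, following the classical scheme of Schoen--Simon (this is precisely \cite[Corollary~1]{ScS81}). After scaling the ambient metric by $\varrho^{-1}$ we may assume $\varrho = 1$. If no constant $C = C(N,\mu)$ as claimed existed, there would be a sequence of embedded, orientable minimal hypersurfaces $M_k \subset N$, stable and satisfying $\h^n(M_k \cap B^N_1(p_k)) \le \mu$ in $B^N_1(p_k)$, yet with $\sup_{B^N_{1/2}(p_k)} |A_{M_k}| \to \infty$.

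\textbf{Point selection.} First I would use the standard point-picking trick of Schoen: pick $q_k \in M_k \cap \overline{B^N_{3/4}(p_k)}$ maximizing $q \mapsto \big(\tfrac34 - d_N(p_k,q)\big)|A_{M_k}|(q)$, and set $\lambda_k := |A_{M_k}|(q_k)$, $\sigma_k := \tfrac34 - d_N(p_k,q_k)$; since $|A_{M_k}|$ blows up on $B^N_{1/2}(p_k)$ we get $\lambda_k\sigma_k \to \infty$. Rescaling $N$ about $q_k$ by $\lambda_k$ and working in normal coordinates, the rescaled hypersurfaces $\widetilde M_k$ are minimal for ambient metrics $\widetilde g_k \to g_{\mathrm{eucl}}$ in $C^\infty_{loc}(\R^{n+1})$, they remain two-sided (a geodesic ball is orientable, so an orientable $M_k$ is two-sided there, and this is inherited in the limit) and stable, they carry locally uniformly bounded mass ratios by the monotonicity formula together with $\h^n(M_k \cap B^N_1(p_k)) \le \mu$, and they satisfy $|A_{\widetilde M_k}| \le 2$ on balls $B_{\lambda_k\sigma_k/2}(0)$ with $|A_{\widetilde M_k}|(0) = 1$.

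\textbf{Compactness and regularity of the limit.} The core of the argument is to extract a good limit. By the mass bounds, $\widetilde M_k$ subconverges as varifolds to a stationary integral varifold $V$ in $\R^{n+1}$, stable on its regular part and of Euclidean volume growth. When $n \le 5$ the interior $L^p$ curvature estimates of Schoen--Simon--Yau (a consequence of Simons' identity combined with the stability inequality and a Sobolev inequality) directly promote this to smooth local convergence. For $n = 6$ one invokes the full Schoen--Simon regularity theory: a Federer-type dimension reduction, together with the exclusion of the Simons cone as a tangent cone (which is exactly where $n \le 6$ enters), shows that the singular set of $\mathrm{spt}\,V$ has Hausdorff dimension at most $n - 7 < 0$; hence $V$ is a smooth, complete, embedded minimal hypersurface and the convergence is smooth. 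In particular $|A_V|(0) = 1$, so $V$ is not a hyperplane.

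\textbf{Bernstein theorem and contradiction.} Finally, $V$ is a complete, two-sided, stable minimal hypersurface in $\R^{n+1}$ with Euclidean volume growth, so the Bernstein-type theorem of Schoen--Simon (Schoen--Simon--Yau for $n \le 5$) forces $V$ to be a hyperplane, whence $A_V \equiv 0$, contradicting $|A_V|(0) = 1$. This contradiction yields the estimate, with $C$ depending only on $\mu$ and on a bound for the geometry of $N$. The step I expect to be the main obstacle is the compactness/regularity claim in the borderline dimension $n = 6$: ruling out a codimension-$7$ singular set (equivalently, the Simons cone as a tangent cone) in the varifold limit is precisely where the hypothesis $n \le 6$ is essential and where the full Schoen--Simon machinery, rather than the more elementary Schoen--Simon--Yau estimates, is required.
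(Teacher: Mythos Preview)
The paper does not prove this proposition at all: it is quoted verbatim as \cite[Corollary~1]{ScS81} and used throughout as a black box, so there is no ``paper's own proof'' to compare against. Your sketch is essentially the standard Schoen--Simon contradiction/blow-up argument and is correct in outline; in particular you have correctly identified that the $n\le 5$ case follows already from Schoen--Simon--Yau while $n=6$ genuinely requires the Schoen--Simon regularity theory to rule out a singular limit.

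One small point worth tightening: the normalisation ``scale the ambient metric by $\varrho^{-1}$ so that $\varrho=1$'' is slightly at odds with wanting $C=C(N,\mu)$, since rescaling changes the ambient geometry. In the contradiction argument you really have a sequence $(M_k,p_k,\varrho_k)$ in the \emph{fixed} closed manifold $N$; since $N$ is compact, $\varrho_k$ is bounded above by $\mathrm{diam}(N)$ and the point-picking quantity $\lambda_k\sigma_k\to\infty$ already forces the rescaled metrics $\lambda_k^2 g$ (in normal coordinates at $q_k$) to converge to the Euclidean metric regardless of the size of $\varrho_k$. So it is cleaner to skip the preliminary $\varrho$-normalisation and run the point-picking directly on $B^N_{\varrho_k}(p_k)$.
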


The second result is a consequence of the monotonicity formula for minimal hypersurfaces.
\begin{prop}[{\cite[pp 778--780]{ScS81}}]\label{ssvol}
Suppose $M\subset N$ is minimal and embedded with $\h^n(M)\leq \Lambda$. There exists some $0<\delta_i=\delta_i(N) < \frac{inj_N}{2}$ and $C=C(N, \Lambda)$ such that for $p\in M$ and any $q\in B^N_{\delta_i}(p)\cap M$, $0< r<\delta_i$, we have 
\begin{equation*}
\frac{1}{C} \leq \frac{\h^n(M\cap B^N_r(q))}{r^n} \leq C.
\end{equation*}
\end{prop}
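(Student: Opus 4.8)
The statement is the standard two-sided volume (density) ratio bound for embedded minimal hypersurfaces of bounded total area in a closed Riemannian manifold, which follows from the almost-monotonicity formula. So the plan is to recall the ambient monotonicity formula and extract both bounds from it.

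First I would fix a radius $\delta_i < \tfrac{inj_N}{2}$ small enough that geodesic balls $B^N_{\delta_i}(p)$ are uniformly comparable to Euclidean balls and, crucially, that the ambient curvature of $N$ is controlled on such balls. On a closed manifold all curvature quantities are bounded, so there is a constant $\Lambda_0 = \Lambda_0(N)$ with $|Ric_N| \le \Lambda_0$, and one can choose $\delta_i = \delta_i(N)$ so that the exponential map at any point is a bi-Lipschitz diffeomorphism with constants close to $1$ on $B^N_{\delta_i}$. The key analytic input is the almost-monotonicity formula for minimal hypersurfaces in a Riemannian manifold: there is a constant $\Lambda_1 = \Lambda_1(N)$ such that for $q \in M$ and $0 < s < t < \delta_i$, the function $s \mapsto e^{\Lambda_1 s}\,\tfrac{\h^n(M\cap B^N_s(q))}{\omega_n s^n}$ is monotone non-decreasing (here one uses that $M$ is minimal, i.e. $\vec H = 0$, so the only error term in the first variation comes from the ambient metric not being flat). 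This is exactly the computation on pp.\ 778--780 of \cite{ScS81}.

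For the \emph{lower} bound, I would use that $M$ is a smooth embedded hypersurface, so at every point $q \in M$ the density $\Theta(M,q) = \lim_{s\to 0}\tfrac{\h^n(M\cap B^N_s(q))}{\omega_n s^n}$ equals $1$. Letting $s \to 0$ in the almost-monotonicity inequality with $t = r$ gives
\[
\frac{\h^n(M\cap B^N_r(q))}{\omega_n r^n} \ge e^{-\Lambda_1 r}\,\Theta(M,q) = e^{-\Lambda_1 r} \ge e^{-\Lambda_1 \delta_i},
\]
which yields the lower bound with a constant depending only on $N$. For the \emph{upper} bound I would instead compare at two scales: for $q \in B^N_{\delta_i}(p) \cap M$ and $0 < r < \delta_i$, monotonicity from $s = r$ up to $t$ slightly less than $\delta_i$ (say $t = \tfrac32 \delta_i$ after shrinking $\delta_i$ so that $B^N_{3\delta_i/2}(q) \subset N$ is still a nice ball — or simply up to any fixed radius $\rho_0(N)$) gives
\[
\frac{\h^n(M\cap B^N_r(q))}{\omega_n r^n} \le e^{\Lambda_1 t}\,\frac{\h^n(M\cap B^N_{t}(q))}{\omega_n t^n} \le e^{\Lambda_1 \rho_0}\,\frac{\h^n(M)}{\omega_n \rho_0^n} \le \frac{e^{\Lambda_1 \rho_0}}{\omega_n \rho_0^n}\,\Lambda,
\]
using $\h^n(M) \le \Lambda$. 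This gives the upper bound with $C$ depending on $N$ and $\Lambda$. Taking the larger of the two resulting constants gives the stated $C = C(N,\Lambda)$, and note the upper bound is the only place the area hypothesis $\h^n(M)\le\Lambda$ is used.

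The only mild subtlety — and the step I would be most careful about — is making sure the almost-monotonicity formula holds on a fixed scale $\delta_i$ depending only on $N$ (not on $M$), which requires the uniform curvature bounds and injectivity radius lower bound on the closed manifold $N$; orientability of $M$ plays essentially no role here beyond ensuring $\h^n$ is the right notion of area and that the first variation computation is unambiguous. Everything else is the classical monotonicity argument, so no genuinely hard step remains; one just cites \cite[pp.\ 778--780]{ScS81} for the formula itself.
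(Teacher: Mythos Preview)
Your proposal is correct and is exactly the standard monotonicity argument; the paper itself gives no proof of this proposition but simply records it as a consequence of \cite[pp.\ 778--780]{ScS81}, and what you have written is precisely the content of that reference. In particular your identification of the two steps --- the lower bound from $\Theta(M,q)=1$ via almost-monotonicity, and the upper bound by comparing to a fixed ambient scale and invoking $\h^n(M)\leq\Lambda$ --- matches the cited computation, so nothing further is needed.
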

\begin{remark}\label{ssregr}
Notice that if $M\subset N$ is minimal with $\h^n(M) \leq \Lambda$ then Propositions \ref{ssreg} and \ref{ssvol} imply that if $B_{\varrho}(p)\cap M$ is stable for $p\in M$ and $0<\varrho< \delta_i$, then there exists some $C=C(N,\Lambda)$ such that
\begin{equation*}
\sup_{B^N_{\varrho/2}(p)} |A|\leq \frac{C}{\varrho}.
\end{equation*}  
\end{remark}

Now, to state the localised compactness theorem, we first collect some notation. Let $p\in M$. Then by choosing normal coordinates, we can identify the ball $B^N_{\varrho}(p)$ with $B^{\R^{n+1}}_{\varrho}(0)$ and assume that $T_pM = \R^n = \{x^{n+1}=0\} \subset \R^{n+1}$. By abuse of notation, we will often simply write $B_\varrho$ to denote both $B^N_{\varrho}(p)$ and $B^{\R^{n+1}}_{\varrho}(0)$. We also define the cylinder $C_{\varrho} = B^n_{\varrho}(0) \times \R$, where $B^n_{\varrho}(0)$ denotes a ball in $\R^n = T_pM$. 

Using the above notation, we say that $M_k\to M$ smoothly and graphically at $p\in M$ if for all sufficiently large $k$, there exists smooth functions $u_k^1,\ldots,u_k^m:B^n_{\varrho}(0) \to \R$ such that $M_k\cap C_{\varrho}$ is the collection of graphs of the $u_k^i$ and $u_k^i\to u_M$ in $C^k$ for all $k\geq 2$, where $u_M$ is a graph describing $M$. We call $m$ the \emph{number of leaves} or the \emph{multiplicity} of the smooth and graphical convergence.

Note that if $M_k \to M$ smoothly and graphically, we can also find $\varrho>0$ such that we can consider the $u_k^i$ to be defined on $M\cap B^N_{\varrho}(p)$ with $u_k^i\to 0$ in $C^k$ for all $k\geq 2$. Furthermore, also note that if the convergence is smooth and graphical away from a finite set $\mathcal{Y}$ and $M$ is connected and embedded (so that $M\setminus \mathcal{Y}$ is also connected) then the number of leaves in the convergence is a constant over $M\setminus \mathcal{Y}$. 

Finally, similarly to the class $\mathcal{M}(\Lambda,I) \subset \mathfrak{M}(N)$, we define the class $\mathfrak{M}(U)$ of connected, smooth and embedded minimal hypersurfaces $M\subset U$ where $U$ is an open contractible set of $\R^{n+1}$. We also define the subset $\mathcal{M}(\Lambda,I,U)$, defined by
\begin{equation*}
\Big\{M\in \mathfrak{M}(U) \mid \tfrac{\h^n(M\cap B_R(x))}{R^n}\leq \Lambda \text{ for all $x\in M$, $B_R(x)\subset U$, and $index(M)\leq I$}\Big\}.
\end{equation*}
For simplicity we will actually deal with the class $\mathcal{M}_p(\Lambda,\mu, U)$ defined by
\begin{equation}
\Big\{ M \in \mathfrak{M}(U) \mid \tfrac{\h^n(M\cap B_R(x))}{R^n}\leq \Lambda \text{ for all $x\in M$, $B_R(x)\subset U$, and $\lambda_p(M) \geq -\mu$} \Big\}.
\end{equation}
In the above, both $index(M)$ and $\lambda_p(M)$ are computed with respect to Dirichlet boundary conditions on $\partial(M\cap U)$. Below, we state the local compactness theory only considering $\mathcal{M}_p (\Lambda, \mu)$, but equivalent statements concerning $\mathcal{M}(\Lambda, I)$ can then be obtained directly, as $\mathcal{M}(\Lambda,I,U)\subset \mathcal{M}_{I+1}(\Lambda, 0,U)$ by definition.

The localised compactness theorem can be stated as follows.

\begin{theorem}[{Local version of \cite[Theorem 1.3]{ACS15}}]\label{thm:oldmain}
Let $2\leq n \leq 6$ and $\{U_k^{n+1}\} \subset \R^{n+1}$ be smooth, simply connected, open and bounded subsets of $\R^{n+1}$ equipped with metrics $g_k$. We will assume that $0\in U_{k}\subset U_{k+1}$ and for any compact $V\cemb U_k$ (for $k$ sufficiently large), we have that $g_k\to g$ smoothly for some limit metric $g$. If $\{0\in M_k^n\}\subset \mathcal{M}_p(\Lambda,\mu, U_k)$ for some fixed constants $\Lambda, \mu\in \R$, $p\in \N$ independent of $k$, then for any open $V$ such that $V\cemb U_k$ for $k$ sufficiently large, there exists a closed connected and embedded minimal hypersurface $M\in \mathcal{M}_p(\Lambda,\mu,V)$ where $M_k \to M$ in the varifold sense on $V$.
 
Now, assuming that $M_k\neq M$ eventually, we have that the convergence is locally  smooth and graphical for all $x\in M\setminus  \mathcal{Y}$ where $\mathcal{Y}=\{y_i\}_{i=1}^J\subset M$ is a finite set with $J\leq p-1$. Moreover, if the number of leaves in the convergence is one then $\mathcal{Y}=\emptyset$. 

Finally, $\mathcal{Y}\neq \emptyset$ implies that $M$ must be stable in any proper subset of $V$. 
\end{theorem}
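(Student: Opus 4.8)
I would take the earlier conclusions of the theorem (the varifold convergence, the finite set $\mathcal{Y}$ with $J\leq p-1$, the constancy of the multiplicity $m$ of the smooth graphical convergence on $M\setminus\mathcal{Y}$, and the implication ``$m=1\Rightarrow\mathcal{Y}=\emptyset$'') as given, obtaining them by localising the argument of \cite{ACS15} to an arbitrary $V\cemb U_k$. The stability assertion then rests on the classical principle that a minimal hypersurface carrying a positive solution of its Jacobi equation is stable: since $\mathcal{Y}\neq\emptyset$ forces $m\geq 2$, the $m$ ordered, pairwise disjoint sheets of $M_k$ converging to $M$ will, after rescaling, produce exactly such a positive Jacobi field on $M\setminus\mathcal{Y}$.

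\emph{Construction of the Jacobi field.} Assume $\mathcal{Y}\neq\emptyset$, so that $m\geq 2$, and pass to the orientable double cover of $M$ if $M$ is non-orientable (this is how $index$, $\lambda_p$ and stability are defined in that case, and both $M$ and $M\setminus\mathcal{Y}$ remain connected, the latter because $n\geq 2$). On every compact $K\cemb M\setminus\mathcal{Y}$ and for $k$ large (depending on $K$), the smooth graphical convergence gives ordered graphs $u_k^1<\dots<u_k^m$ over $K$ describing $M_k$, with $u_k^i\to 0$ in $C^\infty(K)$; here $u_k^1<u_k^2$ \emph{strictly} since the $m$ leaves are pairwise disjoint. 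Put $w_k:=u_k^2-u_k^1>0$. Subtracting the two minimal surface equations and writing the difference as $\int_0^1\tfrac{d}{dt}(\cdots)\,dt$ shows $\mathcal{L}_k w_k=0$ for a linear, uniformly elliptic operator $\mathcal{L}_k$ whose coefficients are built from $u_k^1,u_k^2$ and their derivatives up to order two and hence converge in $C^\infty_{loc}(M\setminus\mathcal{Y})$ to those of the Jacobi operator $L_M\phi=\Delta_M\phi+(|A|^2+Ric_N(\nu,\nu))\phi$. Fixing $x_0\in M\setminus\mathcal{Y}$, normalise $\bar w_k:=w_k/w_k(x_0)$; the Harnack inequality (chained along paths in the connected set $M\setminus\mathcal{Y}$) bounds $\bar w_k$ above and below away from zero on compact subsets, Schauder estimates give local $C^{2,\alpha}$ bounds, and a diagonal subsequence yields $\bar w_k\to\bar w$ in $C^2_{loc}(M\setminus\mathcal{Y})$ with $L_M\bar w=0$ and, by the strong maximum principle (since $\bar w\geq 0$ and $\bar w(x_0)=1$), $\bar w>0$. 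Thus $\bar w$ is a positive Jacobi field on $M\setminus\mathcal{Y}$.

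\emph{Stability.} For $\phi\in C_c^\infty(M\setminus\mathcal{Y})$ I would substitute $\phi=\bar w\,\psi$ and integrate by parts using $L_M\bar w=0$ to obtain the standard identity
\[
Q(\phi\nu,\phi\nu)=\int_M\big(|\D\phi|^2-(|A|^2+Ric_N(\nu,\nu))\phi^2\big)\id V_M=\int_M\bar w^2\,|\D\psi|^2\id V_M\geq 0,
\]
so $Q(\cdot,\cdot)\geq 0$ on sections supported in $M\setminus\mathcal{Y}$. Since $\mathcal{Y}$ is a finite set of points and $n\geq 2$, points have vanishing $H^1$-capacity in $M$, so there are cut-offs $\eta_j\in C_c^\infty(M\setminus\mathcal{Y})$ with $0\leq\eta_j\leq 1$, $\eta_j\to 1$ pointwise and $\int_M|\D\eta_j|^2\to 0$; applying the inequality above to $\eta_j\phi$ for arbitrary $\phi\in C_c^\infty(M)$ and letting $j\to\infty$ (the gradient cross-term and the $|\D\eta_j|^2$-term vanish by Cauchy--Schwarz, while $|A|$ is bounded on the compact set $\operatorname{supp}\phi$) gives $Q(\phi\nu,\phi\nu)\geq 0$. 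Hence $Q(\cdot,\cdot)\geq 0$ for all compactly supported sections of $NM$, which is exactly stability of $M$ in any $W\cemb V$; the restriction to proper subsets is what makes this coincide with the Dirichlet-eigenvalue formulation of stability, since $|A|$ is controlled on $M\cap\overline{W}$ but need not be controlled up to $\partial V$.

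\emph{Main obstacle.} The elliptic machinery --- uniform ellipticity and the $C^\infty_{loc}$ convergence $\mathcal{L}_k\to L_M$, the Harnack chaining, the Schauder/diagonal extraction, and the substitution identity --- is routine once the graphical convergence on $M\setminus\mathcal{Y}$ is available. The point requiring real care is the interaction with the singular set: the Jacobi field lives only on $M\setminus\mathcal{Y}$, and it is precisely the finiteness of $\mathcal{Y}$ together with the hypothesis $n\geq 2$ (so that points carry no $H^1$-capacity) that lets one promote stability on $M\setminus\mathcal{Y}$ to stability across $\mathcal{Y}$. One should also check that the base point $x_0$ and the cut-offs $\eta_j$ may be chosen independently of $k$, which again uses only that the graphical convergence is uniform on compact subsets of $V\setminus\mathcal{Y}$.
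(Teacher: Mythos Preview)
Your argument is correct, but the paper takes a different (and somewhat shorter) route to the final stability statement.

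First, a minor point: the paper observes that $M$ is automatically two-sided, since one may take $V$ simply connected and $M$ is embedded; so your passage to the orientable double cover is unnecessary here.

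More substantively, the paper also constructs a positive Jacobi field from the multiplicity (citing \cite{Sha15}), but it asserts that this field $f$ is defined on all of $W\cap M$ for any $W\cemb V$, i.e.\ already across $\mathcal{Y}$ (the removable singularity being part of the cited construction). It then argues by contradiction via a \emph{touching argument}: if some proper $S$ were unstable, take the first Dirichlet eigenfunction $\phi>0$ on $S\cap M$ with $\lambda_1(S\cap M)<0$, scale $f$ so that $f\geq\phi$ with equality at an interior point $p$, and compute
\[
\Delta_M(f-\phi)(p)=-\big(|A|^2+Ric_N(\nu,\nu)\big)(f-\phi)(p)+\lambda_1(S\cap M)\,\phi(p)=\lambda_1(S\cap M)\,\phi(p)<0,
\]
contradicting the interior minimum of $f-\phi$ at $p$.

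So the two proofs differ in where the singular set $\mathcal{Y}$ is handled: the paper absorbs it into the construction of $f$ (extending $f$ across $\mathcal{Y}$ via \cite{Sha15}) and then runs a clean maximum-principle comparison, whereas you keep $\bar w$ defined only on $M\setminus\mathcal{Y}$, establish stability there via the Fischer--Colbrie--Schoen substitution $\phi=\bar w\,\psi$, and then push stability across $\mathcal{Y}$ by the capacity/cut-off argument. Your route is more self-contained (it does not rely on the extension of the Jacobi field) at the price of an extra limiting step; the paper's route is shorter once one accepts the extension from \cite{Sha15}. Both are standard and valid.
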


\begin{proof}
The reader can follow the steps of the (global) compactness theorem derived in \cite{ACS15} in order to prove all but the final statement of the theorem. Note that $M$ must be two-sided since we could always choose $V$ to be simply connected, and $M$ is embedded. 

If the convergence is not smooth then it must be of multiplicity, and hence as in \cite{Sha15} we can construct a nowhere vanishing Jacobi field $f>0$ with respect to some choice of unit normal $\nu$ defined on $W\cap M$ for any $W\cemb V$. Thus, if $M$ is unstable on some proper subset $S$ there exists a positive function $\phi$ which vanishes on $\partial (S\cap M)$ (and is strictly positive on the interior)  associated with the smallest eigenvalue $\lambda_1 (S\cap M) < 0$. Now, let $f$ be the Jacobi function associated with $S\cap M$ -- notice that $f>0$ even at the boundary. By multiplying $f$ by some constant, we can ensure that $f\geq \phi$, and moreover $f(p)=\phi(p)$ for some $p\in S\cap M$. Thus $f-\phi \geq 0$, it attains its minimum at $p$ (on the interior of $S\cap M$) and 
\begin{align*}
\Delta_M (f - \phi) (p) &= - \big(|A|^2 + Ric_N(\nu,\nu)\big) (f - \phi) (p)+ \lambda_1(S\cap M) \phi( p)\\
&= \lambda_1(S\cap M) \phi( p) <0,
\end{align*}
a contradiction.
\end{proof}
 
This local version of the compactness theorem has the following useful consequence for us.
\begin{cor}\label{cor:Rn}
Let $2\leq n \leq 6$ and $\{U_k^{n+1}\}\subset \R^{n+1}$ be smooth, simply connected, open and bounded subsets of $\R^{n+1}$ with $0\in U_{k}\subset U_{k+1}$ and $\R^{n+1} = \bigcup_k U_k$. Again, suppose that each $U_k$ is equipped with a metric $g_k$ which converges smoothly and locally to some metric $g$ defined on $\R^{n+1}$.  
  
If $\{0\in M_k^n\}\subset \mathcal{M}_p(\Lambda,\mu, U_k)$ for some fixed constants $\Lambda, \mu \in \R$, $p\in \N$ independent of $k$ and $\liminf\lambda_p(M_k)\geq 0$. Then there exists some $M\in \mathcal{M}(\Lambda, p-1,  \R^{n+1})$ with $M_k \to M$ smoothly and graphically away from at most $p-1$ points $\mathcal{Y}=\{y_i\}_{i=1}^J$ on any open $V\cemb \R^{n+1}$. 

If $g = g_0$ is the Euclidean metric, then $M$ has finite total curvature $\mathcal{A}(M)<\infty$ and if $\mathcal{Y}\neq \emptyset$ then $M$ must be a plane. 
\end{cor}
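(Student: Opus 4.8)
The plan is to derive both halves of the statement from facts that are already available: the finiteness of the total curvature from the theorems of Fischer-Colbrie and Tysk quoted in the introduction, and the rigidity statement from the last line of Theorem \ref{thm:oldmain} combined with the Schoen-Simon curvature estimate.

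First I would unravel what the hypotheses give when $g=g_0$. By the first part of Corollary \ref{cor:Rn} (already established via Theorem \ref{thm:oldmain}), $M$ is a connected, complete, properly embedded minimal hypersurface in Euclidean space $\R^{n+1}$ with $2\le n\le 6$; since $M\in\mathcal{M}(\Lambda,p-1,\R^{n+1})$ it satisfies the Euclidean volume growth bound $\h^n(M\cap B_R(x))\le \Lambda R^n$ for all $x\in M$ and $R>0$, and it has finite Morse index, $index(M)\le p-1$. Moreover $M$ is two-sided — hence, being a hypersurface of $\R^{n+1}$, orientable — exactly as observed in the proof of Theorem \ref{thm:oldmain}, because the exhausting sets can be taken simply connected. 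For the first assertion I would then simply invoke the result of Fischer-Colbrie \cite{FC85} when $n=2$ and of Tysk \cite{T89} when $3\le n\le 6$: a complete oriented minimal hypersurface of $\R^{n+1}$ with finite index and (for $n\ge 3$) Euclidean volume growth has $\A(M)=\int_M|A|^n\id V_M<\infty$.

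For the second assertion, assume $\mathcal{Y}\neq\emptyset$. Recall that $M$ was produced by applying Theorem \ref{thm:oldmain} on each open $V\cemb\R^{n+1}$, and that the final line of that theorem states: if $\mathcal{Y}\neq\emptyset$ then $M$ is stable in every proper subset of $V$. I would now let $V$ exhaust $\R^{n+1}$: given any bounded open set $W$, choose $V\cemb\R^{n+1}$ large enough that $\overline{W}\subsetneq V$ and $V$ contains at least one point of $\mathcal{Y}$ — possible since $\R^{n+1}=\bigcup_k U_k$. Then $M\cap W$ is stable. Since $W$ was arbitrary, $M$ is stable on every bounded region, i.e.\ $M$ is a complete, two-sided, stable minimal hypersurface in $\R^{n+1}$.

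Finally I would conclude that such an $M$ is an affine hyperplane. For every $p\in M$ and every $\varrho>0$, $M$ is stable in $B_\varrho(p)\cap M$ and the density ratio there is bounded by $\Lambda$; hence the Schoen-Simon interior curvature estimate for stable minimal hypersurfaces (the Euclidean instance of Proposition \ref{ssreg}, valid for $n\le 6$, the ambient dependence dropping out) yields $\sup_{B_{\varrho/2}(p)}|A|\le C(\Lambda)/\varrho$. Letting $\varrho\to\infty$ gives $|A|(p)=0$, and since $p$ is arbitrary, $A\equiv 0$; thus the connected complete hypersurface $M$ is a hyperplane. (Alternatively one may quote directly the Bernstein-type theorem for stable minimal hypersurfaces with Euclidean volume growth in dimensions $n\le 6$ from \cite{ScS81}.) The only point requiring any care is the passage from ``stable in every proper subset of each $V$'' to ``globally stable'', and this is harmless once one notes, as above, that the sets $V$ in the construction of $M$ can be chosen to exhaust $\R^{n+1}$ while containing a fixed point of $\mathcal{Y}$; every other step is a direct citation of a result recalled above.
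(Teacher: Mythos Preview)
Your argument is correct and follows essentially the same route as the paper: finite total curvature comes from Tysk's theorem (the paper cites only \cite{T89}, while you split off the $n=2$ case to Fischer-Colbrie, which is harmless), and the rigidity when $\mathcal{Y}\neq\emptyset$ is obtained by combining the stability conclusion of Theorem~\ref{thm:oldmain} with the Schoen--Simon curvature estimate, letting the radius go to infinity. The only thing you gloss over is that the membership $M\in\mathcal{M}(\Lambda,p-1,\R^{n+1})$ is not quite ``already established via Theorem~\ref{thm:oldmain}'': that theorem only gives $M\in\mathcal{M}_p(\Lambda,\mu,V)$, and the paper uses the extra hypothesis $\liminf\lambda_p(M_k)\ge 0$ together with an eigenvalue comparison from \cite{ACS15} to upgrade this to $index(M)\le p-1$; you should mention this step rather than absorb it into the citation of Theorem~\ref{thm:oldmain}.
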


\begin{proof}
Again, most of the statements are immediate. By Theorem \ref{thm:oldmain}, we know that for any relatively compact $V$ we have the existence of some $M\in \mathcal{M}_p(\Lambda,\mu,V)$ as above. Since this is true for all $V$, we end up with $M\in \mathcal{M}_p(\Lambda, \mu ,\R^{n+1})$ and now we check that in fact $index(M)\leq p-1$. For a contradiction we suppose the contrary; this guarantees that on some such set $V$ we have $\lambda_p(M\cap V)=\alpha<0$ and following the proof in \cite[p.~8]{ACS15} we find eventually that $\lambda_p(M_k\cap V)\leq \frac{\alpha}{2}$ contradicting the assumption that $\liminf\lambda_p(M_k)\geq 0$.

If $g=g_0$ then $M$ has finite total curvature by the main result in Tysk \cite{T89}. If also $\mathcal{Y}\neq \emptyset$ then it is stable on every compact set (and therefore stable), it has finite Euclidean volume growth and therefore by Proposition \ref{ssreg} it is a plane.
\end{proof}

A further important consequence of the above is the following corollary.

\begin{cor}\label{nontriv}
Let $\{M_k\}\subset \mathcal{M}_p(\Lambda, \mu) \subset \mathfrak{M}(N)$ so that (up to subsequence) $M_k\to M$ for $M$ as in \cite{ACS15}. Suppose that there are sequences $\{r_k\}\subset \R_{>0}$ and $\{p_k\}\subset N$ satisfying
\begin{itemize}
\item $M_k\cap B_{r_k + (r_k)^2}(p_k)$ is unstable for all $k$,
\item $p_k \in M_k$ with $p_k \to y\in M$ and $r_k \to 0$,
\item $M_k\cap B_{r_k/2}(q_k)$ is stable for all $q_k\in M_k\cap B_{\varrho_k}(p_k)$ with $\varrho_k/r_k\to \infty$. 
\end{itemize}
Then some connected component of the re-scaled hypersurfaces 
$$\tilde{M}_k \cap B_{\varrho_k/r_k}(0) : = \frac{1}{r_k} \big(M_k\cap B_{\varrho_k}(p_k)- p_k\big)$$ 
must converge smoothly (on any compact set) with multiplicity one to a non-planar minimal hypersurface $\Sigma_1\subset \R^{n+1}$ with $\Sigma_1\cap B_2(0)$ unstable. As a consequence, we must have that $\lambda_1(M_k \cap (B_{2r_k}(p_k)))\to -\infty$.

If we furthermore suppose that there is another point-scale sequence $(\hat{r}_k, \hat{p}_k)$ with $\hat{r}_k \geq r_k$ for all $k$ and satisfying
\begin{itemize}
\item $M_k\cap (B_{\hat{r}_k+ (\hat{r}_k)^2}(\hat{p}_k)\setminus B_{2r_k}(p_k))$ is unstable for all $k$,
\item there exists some $C<\infty$ with $B_{\hat{r}_k}(\hat{p}_k)\subset B_{Cr_k}(p_k)$ for all $k$,
\end{itemize}
then $\lambda_1(M_k\cap (B_{2\hat{r}_k}(\hat{p}_k)\setminus B_{2r_k}(p_k))) \to -\infty.$
\end{cor}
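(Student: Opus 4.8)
The plan is to reduce everything to a fixed-scale problem by blowing up at the point-scale $(r_k,p_k)$, and then to run the same mechanism that produces the first conclusion of the corollary, but now on an annular region. Set $\tilde M_k := \tfrac1{r_k}(M_k-p_k)$ with the rescaled metrics $\tilde g_k := r_k^{-2}g$, which converge smoothly and locally to the Euclidean metric since $r_k\to 0$; write $\tilde{\hat p}_k := \tfrac1{r_k}(\hat p_k-p_k)$ and $\tilde{\hat r}_k := \hat r_k/r_k$. From $\hat r_k\geq r_k$ and $B_{\hat r_k}(\hat p_k)\subset B_{Cr_k}(p_k)$ one gets $1\leq\tilde{\hat r}_k\leq 2C$ and $|\tilde{\hat p}_k|\leq C$, so after passing to a subsequence $\tilde{\hat r}_k\to\hat r_\infty\in[1,2C]$ and $\tilde{\hat p}_k\to\hat p_\infty$. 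Under this blow-up the target region becomes $\tilde\Omega_k := B_{2\tilde{\hat r}_k}(\tilde{\hat p}_k)\setminus B_2(0)\to\Omega_\infty := B_{2\hat r_\infty}(\hat p_\infty)\setminus\overline{B_2(0)}$ and the instability region becomes $\tilde\Omega_k' := B_{\tilde{\hat r}_k(1+\hat r_k)}(\tilde{\hat p}_k)\setminus B_2(0)\to\Omega_\infty' := B_{\hat r_\infty}(\hat p_\infty)\setminus\overline{B_2(0)}$ (using $\hat r_k\to0$); crucially the outer radius jumps from $\approx\hat r_\infty$ to $2\hat r_\infty$, a \emph{definite} increase. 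Since $\lambda_1(\tilde M_k\cap\tilde\Omega_k)=r_k^2\,\lambda_1(M_k\cap\Omega_k)$, it suffices to prove $\limsup_k\lambda_1(\tilde M_k\cap\tilde\Omega_k)<0$, which then forces $\lambda_1(M_k\cap\Omega_k)\to-\infty$.

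Next I would record the compactness of the blow-up. The hypotheses inherited from the first part of the corollary give, after rescaling, that every intrinsic ball of radius $\tfrac12$ in $\tilde M_k$ is stable; together with the uniform local density bounds coming from $M_k\in\mathcal M_p(\Lambda,\mu)$ and the monotonicity formula, Remark \ref{ssregr} yields $\sup_{\tilde M_k\cap B_R(0)}|\tilde A_k|\leq C(R)$ for every fixed $R$. Hence $\tilde M_k$ subconverges smoothly and locally (possibly with multiplicity) to a complete embedded minimal hypersurface $\Sigma\subset\R^{n+1}$; its component through the origin is the non-planar $\Sigma_1$ of the first statement, and by Corollary \ref{cor:Rn} each component of $\Sigma$ has finite total curvature. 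In particular $\tilde M_k\cap\tilde\Omega_k\to\Sigma\cap\Omega_\infty$ and $\tilde M_k\cap\tilde\Omega_k'\to\Sigma\cap\Omega_\infty'$ smoothly.

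I would then pass the instability to the limit. Since $\tilde M_k\cap\tilde\Omega_k'$ is unstable, $\lambda_1(\tilde M_k\cap\tilde\Omega_k')<0$; the uniform curvature bound and the (vanishing) rescaled Ricci term give a uniform lower bound $\lambda_1(\tilde M_k\cap\tilde\Omega_k')\geq-C_0$ and an $H^1$-bound on the $L^2$-normalised first eigenfunctions. The component of $\tilde M_k\cap\tilde\Omega_k'$ carrying the negative eigenvalue has diameter bounded below — by the Michael--Simon--Sobolev inequality a minimal hypersurface of bounded second fundamental form and density in a sufficiently small ball has positive-definite Jacobi form — so by De Giorgi--Nash--Moser and a gradient estimate on the bounded-geometry surface the first eigenfunctions are bounded below by a definite constant on a ball of definite size at definite interior distance. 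Passing to the limit via Rellich, the first eigenfunctions converge to a nonzero limit, and lower semicontinuity of the Dirichlet energy gives $\lambda_1(\Sigma\cap\Omega_\infty')\leq 0$. (If the convergence has multiplicity $\geq 2$ then by Theorem \ref{thm:oldmain} $\Sigma$ is stable in proper subsets of $\Omega_\infty$, which forces $\lambda_1(\Sigma\cap\Omega_\infty')=0$.)

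Finally I would upgrade this to a \emph{strict} inequality on the enlarged region and conclude. If $\lambda_1(\Sigma\cap\Omega_\infty')<0$, transplanting a first eigenfunction of $\Sigma\cap\Omega_\infty$ back onto $\tilde M_k\cap\tilde\Omega_k$ via the smooth convergence (and, in the multiplicity case, restricting to a single sheet) makes $\limsup_k\lambda_1(\tilde M_k\cap\tilde\Omega_k)<0$, finishing the proof. I expect the main obstacle to be ruling out the borderline case $\lambda_1(\Sigma\cap\Omega_\infty)=0$. Here one exploits the definite enlargement from $\Omega_\infty'$ to $\Omega_\infty$: the shell $\Omega_\infty\setminus\overline{\Omega_\infty'}$ is non-empty precisely because $\hat r_\infty\geq 1$ and the limit configuration satisfies $|\hat p_\infty|+\hat r_\infty\geq 2$ (this is exactly where the hypotheses $\hat r_k\geq r_k$, the non-emptiness of $B_{\hat r_k+\hat r_k^2}(\hat p_k)\setminus B_{2r_k}(p_k)$, and the factor $2\hat r_k$ enter). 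If the component of $\Sigma\cap\Omega_\infty$ realising the first eigenvalue reaches $\partial B_{\hat r_\infty}(\hat p_\infty)$, strict domain monotonicity together with unique continuation for Jacobi fields already gives $\lambda_1(\Sigma\cap\Omega_\infty)<\lambda_1(\Sigma\cap\Omega_\infty')\leq 0$; the only remaining possibility is that this component lies entirely in $\overline{B_{\hat r_\infty}(\hat p_\infty)}\setminus B_2(0)$ with all of its boundary on $\partial B_2(0)$, hence is part of an \emph{unbounded} component of $\Sigma$ (there are no closed minimal hypersurfaces in $\R^{n+1}$) which must therefore cross into the shell elsewhere, and from this one derives a contradiction with $\lambda_1=0$ exactly as in the proof of the first statement of the corollary. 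With $\lambda_1(\Sigma\cap\Omega_\infty)<0$ established, the eigenvalue scaling identity of the first paragraph gives $\lambda_1(M_k\cap\Omega_k)=r_k^{-2}\lambda_1(\tilde M_k\cap\tilde\Omega_k)\to-\infty$.
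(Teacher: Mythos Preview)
Your blow-up setup and the scaling identity $\lambda_1(\tilde M_k\cap\tilde\Omega_k)=r_k^2\,\lambda_1(M_k\cap\Omega_k)$ are exactly what the paper uses, and your compactness step is fine. But two points deserve comment.

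First, you worry about multiplicity, yet the first part of the corollary already establishes that on the $r_k$-scale every ball of radius $\tfrac12$ is stable, so by Corollary~\ref{cor:Rn} the set of bad points is empty and the convergence to $\Sigma$ is smooth of multiplicity one everywhere. Your branch ``if the convergence has multiplicity $\geq 2$'' never occurs and the accompanying eigenfunction-compactness machinery (Michael--Simon, De Giorgi--Nash--Moser, non-concentration) is unnecessary.

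Second, and more seriously, your handling of the borderline case $\lambda_1(\Sigma\cap\Omega_\infty')=0$ has a gap. If the component $C$ of $\Sigma\cap\Omega_\infty'$ carrying the zero eigenvalue has all its boundary on $\partial B_2(0)$, then enlarging the outer ball from $\hat r_\infty$ to $2\hat r_\infty$ does not enlarge $C$ at all: as a connected component of $\Sigma\cap\Omega_\infty$ it is still the same set, so strict domain monotonicity gives nothing. Your appeal to the unbounded component of $\Sigma$ ``crossing into the shell elsewhere'' produces a \emph{different} component of $\Sigma\cap\Omega_\infty$, which does not lower $\lambda_1$ on $C$; and the reference to ``exactly as in the proof of the first statement'' does not point to an argument that covers this situation.

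The paper avoids this difficulty by reversing the direction of the comparison. Instead of pushing the instability of $\tilde M_k$ to $\Sigma$, it argues by contradiction: assume $\Sigma\cap(B_{2\tilde r}(\tilde p)\setminus B_2(0))$ is stable; then by strict domain monotonicity $\lambda_1>0$ on the compactly contained region $\Sigma\cap(B_{3\tilde r/2}(\tilde p)\setminus B_2(0))$, and since the convergence is smooth of multiplicity one the second variations on $\tilde M_k$ and $\Sigma$ differ by $o(1)$, forcing $\tilde M_k\cap(B_{3\tilde r/2}(\tilde p)\setminus B_2(0))$ to be strictly stable for large $k$. But this region contains $B_{\tilde r_k(1+\hat r_k)}(\tilde p_k)\setminus B_2(0)$, contradicting the assumed instability there. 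Hence $\Sigma\cap\Omega_\infty$ is unstable, and then the argument of \cite[p.~8]{ACS15} (already invoked in the first part) shows $\lambda_1(\tilde M_k\cap\tilde\Omega_k)$ is bounded away from zero, after which your scaling identity finishes. This contrapositive route costs nothing extra and completely sidesteps the $\lambda_1=0$ borderline.
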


\begin{proof}
We use Corollary \ref{cor:Rn} in order to conclude that (since $\lambda_p(\tilde{M}_k\cap B_{\varrho_k/r_k}(0))\to 0$) for every $R$ each component of $\tilde{M}_k\cap B_R(0)$ converges smoothly and graphically to some $\Sigma_{\ell}\in \mathcal{M}(\Lambda, p-1, \R^{n+1})$ away from at most finitely many points $\mathcal{Y}$. However, we get that $\mathcal{Y}=\emptyset$ since $\tilde{M}_k$ is stable on all balls of radius $\frac12$. Moreover each $\Sigma_{\ell}$ is minimal with respect to the Euclidean metric (since we are blowing up) and has finite volume growth by the monotonicity formula, Proposition \ref{ssvol}. 

Thus, each component converges smoothly on every compact subset of $\R^{n+1}$ with multiplicity one to some $\Sigma_{\ell}$. If $\Sigma_{\ell}$ is always planar for each connected component, then in particular every connected component of $\tilde{M}_k\cap B_2$ converges smoothly to a flat disk. At which point we may conclude that the first eigenvalue of the Laplacian $\mu_k$ on $\tilde{M}_k \cap B_2$ is converging to the first eigenvalue of the Laplacian on a flat $B_2\subset \R^n$ (eigenvalues with Dirichlet boundary conditions). In particular $\mu_k >0$ eventually. Thus we must have, for any non-zero $\phi\in C_c^{\infty}(B_2)$
\begin{multline*}
\int_{\tilde{M}_k\cap B_2} \Big(|\D \phi|^2 - (|\tilde{A}_k|^2 + Ric_k(\nu_k,\nu_k))\phi^2 \Big)\ed V_{\tilde{M}_k}\\
\geq \big(\mu_k - \sup_{\tilde{M}_k} (|\tilde{A}_k|^2 + Ric_k(\nu_k,\nu_k))\big) \int_{\tilde{M}_k\cap B_2} \phi^2 \id V_{\tilde{M}_k} >0
\end{multline*}
when $k$ is sufficiently large. In particular, $\tilde{M}_k \cap B_2$ is stable, a contradiction. 

Now suppose that $\Sigma_{\ell}\cap B_2$ is stable for all $\ell$. Thus $\lambda_1(\Sigma_\ell\cap B_{3/2}) > 0$. Since $\tilde{M}_k\cap B_{3/2}$ converges smoothly with multiplicity one to $\Sigma_{\ell} \cap B_{3/2}$ we can smoothly compare their second variations; i.e. for any $\phi\in C_c^{\infty}(\tilde{M}_k\cap B_{3/2})$ we can push it forward to give $\phi \in C_c^{\infty}(\Sigma_\ell\cap B_{3/2})$ (without re-labelling) and 
$$\int_{\Sigma_{\ell}\cap B_{3/2}} \! \Big(|\D \phi|^2 - |A_{\ell}|^2\phi^2\Big) \ed V_{\Sigma_\ell} - \int_{\tilde{M}_k\cap B_{3/2}} \! \Big( |\D \phi|^2 - (|\tilde{A}_k|^2 + Ric_k(\nu_k,\nu_k)) \phi^2 \Big) \ed V_{\tilde{M}_k} \to 0.$$ 
In particular $\tilde{M}_k \cap B_{3/2}$ is strictly stable eventually, a contradiction. Thus we must have that $\lambda_1(\tilde{M}_k\cap B_2)$ is strictly bounded away from zero (otherwise $\Sigma_1\cap B_2$ would indeed be stable by the arguments in \cite[p.~8]{ACS15}) and a re-scaling argument finishes the first part of the proof. 

The argument for the second part is similar; at the $r_k$ scale we have that $B_{\hat{r}_k}(\hat{p}_k)$ corresponds to a ball $B_{\tilde{r}_k}(\tilde{p}_k)$ with $1\leq \tilde{r}_k \leq C$ and $\tilde{p}_k\in B_C(0)$. At which point we can conclude that $\tilde{r}_k \to \tilde{r}\in [1,C]$ and $\tilde{p}_k \to \tilde{p}\in B_C(0)$. Assuming that $(B_{2\tilde{r}}(\tilde{p})\setminus B_2(0))\cap \Sigma_{\ell}$ is stable, the reader can follow the argument above to find a contradiction and finish the proof.  
\end{proof}

%------------------------------------------------------------------
\section{The bubbling argument}\label{sect.bubbling}

In this section, we start our proof of the main theorem. Take the set-up as described in Theorem \ref{thm:main} and assume that $k$ is sufficiently large and $\delta$ sufficiently small so that 
\begin{equation*}
2\delta<\min\left\{\inf_{\mathcal{Y}\ni y_i\neq y_j\in \mathcal{Y}} d_g(y_i,y_j),\,\,\, \frac{inj_N}{2}\right\}.
\end{equation*}

We know that the first part of Theorem \ref{thm:main} holds true by \cite{Sha15} and if the multiplicity of the convergence is equal to $m$ we must have
\begin{equation*}
\int_{M_k\setminus \left(\bigcup_{y_i \in \mathcal{Y}} B_{\delta}(y_i)\right)} |A_k|^n \id V_{M_k} \to m \int_{M\setminus \left(\bigcup_{y_i \in \mathcal{Y}} B_{\delta}(y_i)\right)} |A|^n \id V_M.
\end{equation*}
This also holds true for all $\delta >0$ so that in fact 
\begin{equation}\label{eq.mainlimit}
\lim_{\delta\to 0}\lim_{k\to\infty}\int_{M_k\setminus \left(\bigcup_{y_i \in \mathcal{Y}} B_{\delta}(y_i)\right)} |A_k|^n \id V_{M_k} \to m \mathcal{A}(M).
\end{equation}
Thus it remains to understand what is happening on the small balls $B_\delta (y_i)$ as $\delta \to 0$. In order to study the behaviour here, we need to consider bubbles by extracting various point-scale sequences and look at their blow-up limits. During this process, we often apply the local compactness theorem from the previous section which requires us to (iteratively) pass to subsequences, but for simplicity, we won't always state this explicitly. The important point here is that there will only be finitely many steps where we pass to a subsequence of a previous subsequence, so no diagonal argument is needed.

For fixed $y\in \mathcal{Y}$ consider $M_k\cap B_\delta(y)$, it is possible that there is more than one component but for $k$ sufficiently large there are at most $m$. Note that by the choice of $y$ we must have $M_k \cap B_{r}(y)$ is unstable for all fixed $r>0$ and $k$ large enough. The rough plan from here is to extract point-scale subsequences $(p_k, r_k)$, $p_k\to y$ and $r_k\to 0$ so that we capture some index in each $B_{r_k}(p_k)\cap M_k$. Moreover a standard bubbling argument will tell us that we can capture all the coalescing index in such a way. We will thus end up considering at most $I$ point-scale sequences, each capturing some index.

To be more precise, we will prove the following bubbling theorem. 

\begin{theorem}\label{thm:bubbles}
Let $2\leq n \leq 6$ and $N^{n+1}$ be a smooth closed Riemannian manifold. If $\{M_k^n\}\subset \mathcal{M}(\Lambda,I)$ for some fixed constants $\Lambda\in \R$, $I\in \N$ independent of $k$, then up to subsequence, there exists $M\in \mathcal{M}(\Lambda,I)$ where $M_k \to M$ in the varifold sense. There exist at most $I$ points $\mathcal{Y} = \{y_i\}\subset M$ where the convergence to $M$ is smooth and graphical (with multiplicity $m$) away from $\mathcal{Y}$. Furthermore, for each $y\in \mathcal{Y}$ there exist a finite number of point-scale sequences $\{(p^i_k, r^i_k)\}_{i=1}^{L_y}$ where $\sum_{y\in \mathcal{Y}} L_y \leq I$ with $M_k \ni p^i_k \to y$, $r^i_k\to 0$, and a finite number $\{\Sigma_i\}_{i=1}^{L_y}$ of embedded minimal hypersurfaces in $\R^n$ with finite total curvature where the following is true.
\begin{enumerate}

\item 
 
\begin{itemize}
\item Each $\Sigma_i$ is a disjoint union of planes and at least one non-trivial minimal hypersurface.
\item For all $i\neq j$ 
$$\frac{r^i_k}{r^j_k} + \frac{r^j_k}{r^i_k} + \frac{dist_g(p^i_k, p^j_k)}{r^i_k + r^j_k} \to \infty.$$
\item Taking normal coordinates centred at $p^i_k$ and letting 
$$\tilde{M}^i_k:=\frac{M_k}{r^i_k} \subset \R^{n+1}$$ then $\tilde{M}^i_k$ converges locally smoothly and graphically to $\Sigma_i$ away from the origin, and whenever a component converges to a non-trivial component of $\Sigma_i$ then it does so with multiplicity one. 
\item Given any other sequence $M_k \ni q_k$ and $\varrho_k \to 0$ with $q_k \to y$ and 
$$\min_{i=1,\dots L_y} \Big(\frac{\varrho_k}{r^i_k} + \frac{r^i_k}{\varrho_k} + \frac{dist_g(q_k,p^i_k)}{\varrho_k + r^i_k}\Big)\to \infty$$
then taking normal coordinates at $q_k$ and setting 
$$\hat{M}_k:= \frac{M_k}{\varrho_k}\subset \R^{n+1}$$
then $\hat{M}_k$ converges to a collection of parallel planes. 
\item Furthermore, when $n=2$, $\Sigma_i$ is always connected. The convergence is smooth, and of multiplicity one locally. Moreover we always have 
$$\frac{dist_g(p^i_k,p^j_k)}{r^i_k + r^j_k}\to \infty.$$
\end{itemize}
\item 
Finally there is no lost total curvature in the limit; if we collect all of the $\Sigma_{\ell}$ together for each $y\in \mathcal{Y}$ (without re-labelling) then we have 
\begin{equation}\label{eq.curvid}
\lim_{k\to \infty} \A(M_k) = m\A(M) + \sum_{\ell=1}^J \A(\Sigma_\ell),
\end{equation}
where $J \leq I$. When $k$ is sufficiently large, the $M_k$'s are all diffeomorphic to one another.
\end{enumerate}
\end{theorem}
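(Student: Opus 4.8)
The plan is to prove this by induction on the index bound $I$, extracting point-scale sequences one at a time via a "point of maximal curvature concentration" selection procedure, and then verifying the no-loss-of-curvature identity on the neck regions by a direct estimate. First I would fix $y\in\mathcal{Y}$ and work inside a fixed ball $B_\delta(y)$. Since $M_k\cap B_r(y)$ is unstable for every $r>0$, I would choose a first point-scale sequence $(p_k^1,r_k^1)$ as follows: let $r_k^1$ be (comparable to) the largest radius such that $M_k\cap B_{r_k^1}(p_k^1)$ is unstable while all sub-balls $B_{r_k^1/2}(q_k)$ centred on $M_k$ near $p_k^1$ are stable — this is exactly the configuration demanded by the hypotheses of Corollary \ref{nontriv}. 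That corollary then produces the non-planar blow-up limit $\Sigma_1$ (after translating and rescaling by $1/r_k^1$), with multiplicity-one smooth graphical convergence away from the origin, finite total curvature and finite Euclidean volume growth by Tysk \cite{T89} together with Proposition \ref{ssvol}; moreover $\lambda_1(M_k\cap B_{2r_k^1}(p_k^1))\to-\infty$, so this bubble genuinely "uses up" index. The components of $\tilde M_k^1$ not converging to a non-trivial piece converge (with some multiplicity) to planes, which by Corollary \ref{cor:Rn} must be genuinely flat — this gives the "disjoint union of planes and at least one non-trivial hypersurface" structure.

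Next I would iterate: having extracted $(p_k^1,r_k^1),\dots,(p_k^{i-1},r_k^{i-1})$, I look at whether there remains any instability in $B_\delta(y)$ "not yet accounted for," i.e. on regions that stay far (in the scale-invariant sense) from all previously chosen point-scales. If such instability exists, I extract a maximal such point-scale sequence $(p_k^i,r_k^i)$ using the same selection argument, now applying the \emph{second} part of Corollary \ref{nontriv} (with the annular exclusion $B_{2r_k}(p_k)$) to see that this again forces $\lambda_1\to-\infty$ on a region disjoint from the previous bubbles. Because each extracted bubble contributes a region on which $\lambda_1\to-\infty$, and these regions are eventually disjoint (by the scale-separation $\frac{r_k^i}{r_k^j}+\frac{r_k^j}{r_k^i}+\frac{\mathrm{dist}(p_k^i,p_k^j)}{r_k^i+r_k^j}\to\infty$, which the maximality of the selection guarantees), the number of bubbles is bounded: $\sum_{y}L_y\le I$. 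The process therefore terminates after finitely many steps, and at termination the "no more instability" condition is precisely the statement that any further point-scale sequence $(q_k,\varrho_k)$ that stays scale-far from all the $(p_k^i,r_k^i)$ blows up to a union of parallel planes (again by Corollary \ref{cor:Rn}, with $\mathcal{Y}=\emptyset$ since stability holds on all unit balls). The two-dimensional refinements (connectedness of $\Sigma_i$, multiplicity one, and full separation of centres) follow because in dimension $n=2$ multiplicity-$m$ smooth convergence with $m\ge2$ would produce a positive Jacobi field and hence stability, contradicting the instability built into each bubble.

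For part (2), the curvature identity, the strategy is the standard "body + bubbles + necks" decomposition. Combining \eqref{eq.mainlimit} (the body contributes $m\mathcal{A}(M)$) with the bubble convergences (each $B_{Rr_k^i}(p_k^i)$ contributes $\mathcal{A}(\Sigma_i\cap B_R)\to\mathcal{A}(\Sigma_i)$ as $R\to\infty$, using that $\tilde M_k^i\to\Sigma_i$ in $C^2_{loc}\setminus\{0\}$ and that the total curvature concentrated near the origin vanishes because all sub-balls are stable so Remark \ref{ssregr} gives a scale-invariant curvature bound there), it remains to show the "neck" regions — the parts of $M_k\cap B_\delta(y)$ lying between consecutive scales and far from all centres — carry asymptotically zero total curvature. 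On such a neck region, by the maximality of the selection, $M_k$ is stable on all balls of a definite fraction of the ambient scale, so by Remark \ref{ssregr} we have $|A_k|(x)\le C/d(x)$ where $d(x)$ is that scale; this yields, dyadically, $\int_{\text{dyadic annulus at scale }\rho}|A_k|^n\,dV_{M_k}\le C\,\rho^{-n}\cdot\h^n(M_k\cap\text{annulus})\le C'$ — but \emph{bounded} is not enough, I need it \emph{small}. The key additional input is that on a neck the hypersurface is in fact graphical with small slope over a plane (this follows from the parallel-planes conclusion of part (1) applied at every intermediate scale), so each dyadic annulus carries total curvature that is small, and there are only $O(\log(1/\text{scale ratio}))$ of them while the per-annulus bound decays — more precisely one shows $\sum_{\text{annuli}}\int|A_k|^n\to0$ because the graph functions $u_k$ have $\|u_k\|_{C^2}\to0$ dyadically. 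I expect this neck estimate to be the main obstacle: making rigorous that the graphical description persists \emph{uniformly across all intermediate scales simultaneously} (not just at finitely many fixed scales) requires a covering/continuity-in-scale argument, and one must carefully rule out that total curvature "escapes to infinitely many necks." Finally, the diffeomorphism statement follows because for $k$ large $M_k$ is obtained from $m$ copies of $M$ by removing small disks near $\mathcal{Y}$ and gluing in the (fixed, $k$-independent) bubble topologies along the graphical necks, a construction whose diffeomorphism type stabilises.
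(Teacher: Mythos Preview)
Your overall architecture matches the paper's: extract point-scale sequences by a minimality/instability selection, bound their number by $I$ via disjoint regions of coalescing index, blow up using the local compactness theory, and then close the curvature identity by a body/bubble/neck decomposition. Two points, however, deserve correction.

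\textbf{The $n=2$ refinements.} Your argument that multiplicity $m\ge 2$ forces a positive Jacobi field and hence stability of the limit is correct as far as it goes, but it does not give you connectedness of $\Sigma_i$ nor the full separation of centres (i.e.\ the exclusion of the bubble-on-bubble scenario, Case~2 in the paper). Those statements are about the \emph{structure of the limit in $\R^3$}, not about the multiplicity of the convergence. The paper obtains them from the strong half-space theorem of Hoffman--Meeks: two disjoint complete properly embedded minimal surfaces in $\R^3$ must both be planes. This is what rules out a disconnected $\Sigma_i$ with a non-trivial component, and what forces Case~2 to be vacuous when $n=2$. Your Jacobi-field argument does not substitute for this.

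\textbf{The neck estimate.} You correctly identify this as the main obstacle, but your proposed mechanism --- ``each dyadic annulus carries small total curvature and the per-annulus bound decays, so the sum over $O(\log)$ annuli vanishes'' --- does not work as written. The per-annulus bound you get from Claim~1 is a \emph{uniform} $\eta$ which tends to zero only as $\delta\to 0$, $R\to\infty$, $k\to\infty$; there is no decay across the dyadic scales, and the number of annuli is unbounded, so the sum is of the form $(\text{small})\times(\text{large})$ and gives nothing. The paper avoids summation altogether: it treats the entire neck at once via a Calder\'on--Zygmund estimate $\|\nabla^2(\phi_k u_k)\|_{L^n}\le C\|\Delta(\phi_k u_k)\|_{L^n}$ for a single cutoff $\phi_k$ supported on the whole neck. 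The minimal surface equation makes $\Delta_{\hat g_k}u_k$ quadratically small, and after comparing $\Delta$ with $\Delta_{\hat g_k}$ (using the smallness of the slope $\eta$) one absorbs the Hessian term. The remaining boundary contributions $\int|\nabla^2\phi_k|^n|u_k|^n$ and $\int|\nabla\phi_k|^n$ are controlled by a second ingredient you are missing: Schoen's description of the ends of finite-total-curvature minimal hypersurfaces, which gives $|u_k|\le C r_k^j\log R$ (for $n=2$) or $|u_k|\le C r_k^j$ (for $n\ge 3$) on the inner boundary annulus. This is what produces the final bound $C(\log R)^n/R^n + C\delta^n + C\eta\to 0$. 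Without the CZ estimate and the end asymptotics, your neck argument has a genuine gap.
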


Obviously, Theorem \ref{thm:bubbles} yield the result claimed in Theorem \ref{thm:main}. In this section, we will prove the first part of Theorem \ref{thm:bubbles} which -- together with \eqref{eq.mainlimit} -- also yields the quantisation result in \eqref{eq.curvid} modulo a set of ``neck regions''. In fact, in the argument presented here, these are precisely the regions where we have no a-priori control on the total curvature. However, by construction, our minimal hypersurfaces will be uniformly graphical with uniformly small slope and small dyadic total curvature. Using these facts we will control these intermediate regions in the next section. For convenience, we give a precise definition of how these neck regions look like first.

\begin{defn}\label{neckdef}
Let $M\subset \mathcal{M}(\Lambda, I)$. For $\delta<\delta_i$, $p\in M$ we say that $M\cap B^{\R^{n+1}}_\delta(p)\setminus B^{\R^{n+1}}_{\eps}(p)$ is a \emph{neck of order} ($\eta, L$) if we have $\eps < \delta/4$ and $M\cap (B_\delta\setminus B_{\eps})$ is uniformly graphical over some plane which we may assume (after a rotation) to be defined by $\{x^{n+1}=0\}$ in normal coordinates about $p$. More precisely there exist functions $u^1,\dots u^L$ such that 
\begin{equation*}
M\cap (B_\delta \setminus B_{\eps}) = \bigcup_{i=1}^L \; \{(x^1,\dots , x^n, u^i(x^1,\dots, x^n))\}
\end{equation*}
and also 
\begin{equation}
\sup_{i=1,\ldots,L} \|\D u^i\|_{L^{\infty}} + \sup_{\eps< \varrho < \frac{\delta}{2}} \int_{M \cap (B_{2\varrho}\setminus B_\varrho)} |A|^n \id V_M = :\eta < \infty. 
\end{equation}
\end{defn}

\begin{proof}[Proof of Theorem \ref{thm:bubbles} Part 1.]
Let us start the proof of the bubbling theorem by picking the first point-scale sequence as follows: Let
$$r^1_k = \inf \Big\{r>0 \mid M_k\cap B_r(p) \text{ is unstable for some } p\in B_\delta(y)\cap M_k \Big\}.$$
Note that with $r^1_k$ defined above, we can pick $p^1_k\in B_\delta(y)\cap M_k$ and $\delta >r^1_k>0$ such that $M_k\cap B_{r^1_k+(r^1_k)^2}(p^1_k)$ is unstable. 

Clearly we must have $r^1_k\to 0$ and $p^1_k\to y$ as otherwise the regularity theory of Schoen-Simon (Proposition \ref{ssreg}) would give a uniform $L^{\infty}$ estimate on the second fundamental form in $B_\delta(y)$ and we reach a contradiction to the fact that $y$ is a point of bad convergence. 

Now we extract a second point-scale sequence, but in order to do so we define an admissible class of balls on which we look. Define, for $r^1_k \leq r<\delta$
\begin{align*}
\mathfrak{C}^2_k:=\Big\{&B_r(p) \mid M_k\cap \Big(B_r(p)\setminus B_{2r^1_k}(p^1_k) \Big) \,\,\text{is unstable and $p\in B_\delta (y)\cap M_k$. Furthermore } \\
&\text{ if $B_{2r}(p)\cap B_{2r^1_k}(p^1_k)\neq\emptyset$ then $B_{6r}(p)\cap M_k$ consists of at least two components}\\
&\text{$C^k_1$, $C^k_2$ with $p\in C^k_2$, $B_r(p)\cap C^k_2$ is unstable and $C^k_2\cap B_{2r^1_k}(p^1_k) = \emptyset$.}\Big\}
\end{align*}
If $\mathfrak{C}^2_k\neq \emptyset$ we pick the second point-scale sequence $(p^2_k,r^2_k)$ so that $B_{r^2_k + (r^2_k)^2 }(p^2_k)\in \mathfrak{C}^2_k$, so that in particular  $M_k\cap \big(B_{r^2_k + (r^2_k)^2 }(p^2_k)\setminus B_{2r^1_k}(p^1_k)\big)$ is unstable, and $p^2_k\in M_k$ and
\begin{eqnarray*}
r^2_k = \inf \Big\{r>0 \mid  B_r(p)\in \mathfrak{C}^2_k\Big\}. \end{eqnarray*}
 
Note that by construction we must have $r^2_k\geq r^1_k$. We ask first whether $r^2_k\to 0$. If not, we discard $(p_k^2,r_k^2)$ and we can find no more point-scale sequences; the process stops. Similarly if $\mathfrak{C}^2_k = \emptyset$ we also stop.  

If we do have $r^2_k\to 0$, then we ask whether or not 
\begin{equation}\label{eq.psinf}
\limsup_{k\to \infty}  \bigg(\frac{r^2_k}{r^1_k} + \frac{dist_g(p_k^1,p_k^2)}{r^2_k} \bigg)= \infty.
\end{equation}
Regardless of the outcome of the above, we still continue to try to find the next point-scale sequence. However, if \eqref{eq.psinf} is not true (that is, if $(r^2_k/r^1_k + dist_g(p_k^1,p_k^2)/r^2_k)$ stays bounded) then we mark this sequence (to be ignored later) since it must be part of the previous point-scale sequence (see the paragraph below for more details). In either case, we still keep track of the region $B_{2r^2_k}(p^2_k)$ in order to find the next point-scale sequence, regardless of whether or not the sequence was marked. 

Suppose we have extracted $\ell-1$ point-scale sequences (including the ones that we will ignore) for $y$. Then we  continue (or not) under the following rules: let $U^{\ell-1}_k = \bigcup_{s=1}^{\ell-1} B_{2r^s_k }(p_k^s)$ and define an admissible class of balls with $r^{\ell -1}_k\leq r<\delta$ and
\begin{align*}
\mathfrak{C}^\ell_k:=\Big\{&B_r(p) \mid M_k\cap \Big(B_r(p)\setminus U^{\ell -1}_k \Big) \,\,\text{is unstable and $p\in B_\delta (y)\cap M_k$. Furthermore } \\
&\text{whenever $B_{2r}(p)\cap U^{\ell-1}_k\neq\emptyset$ then $B_{6r}(p)\cap M_k$ consists of at least two components}\\
&\text{$C^k_1$, $C^k_2$ with $p\in C^k_2$, $B_r(p)\cap C^k_2$ is unstable and $C^k_2\cap U^{\ell -1}_k =\emptyset$}\Big\}
\end{align*}
Now if $\mathfrak{C}^\ell_k =\emptyset$ we stop. If not, pick the next point-scale sequence $(p^\ell_k, r^\ell_k)$ in such a way that $B_{r^\ell_k +(r^{\ell}_k)^2}(p^\ell_k)\in \mathfrak{C}^\ell_k$ (in particular $M_k \cap\big( B_{r^\ell_k +(r^{\ell}_k)^2}(p^\ell_k)\setminus U^\ell_k\big)$ is unstable) and
\begin{equation*}
r^\ell_k = \inf \Big\{r>0 \mid B_r(p)\in \mathfrak{C}^{\ell}_k.\Big\}.
\end{equation*}
Again, we must have $r^\ell_k\geq r^{\ell-1}_k$. As before, we ask ourselves at most two questions: First, does $r^\ell_k \to 0$? If not, then we discard $(r_k^\ell,p_k^\ell)$ and the process stops. If $r_k^\ell \to 0$, we ask whether or not it is true that
\begin{equation}\label{eq.psinf2}
\min_{i=1,\dots, \ell-1}\limsup_{k\to \infty}  \bigg(\frac{r^\ell_k}{r^{i}_k} + \frac{dist_g(p_k^\ell,p_k^i)}{r^\ell_k} \bigg) = \infty.
\end{equation}
Again, regardless of whether this is true or not, we continue -- but if it is not true, then we mark the point-scale sequence $(p_k^\ell,r_k^\ell)$ to be ignored later. 

We continue to do this until we exhaust all such sequences. Note that each time we pick a sequence (regardless of whether we intend to ignore it or not) we are accounting for some region where we have index -- and all of these regions are disjoint by construction. Thus, the process must stop after at most $\ell \leq I$ iterations (see \cite[Lemma 3.1]{Sha15}).  

First we remark on the `ignored' sequences $(r^s_k, p^s_k)$ -- these are the ones for which $r^s_k \to 0$ but there exists some $C<\infty$ and $t \in 1,\dots, s-1$ such that
$$ \frac{r^s_k}{r^{t}_k} + \frac{dist_g(p_k^s,p_k^t)}{r^s_k}\leq C.$$
Thus, there exists $C<\infty$ such that $B_{r^s_k}(p^s_k)\subset B_{Cr^t_k}(p^t_k)$. In this sense, we will see these sequences as part of the same point-scale sequence -- if we blow up at either scale $s$ or $t$ (as described below) both re-scaled regions remain a bounded distance from each other. Notice that $t$ itself might be ignored, however in this case we just go one step further and eventually we find some $h\in 1,\ldots, t-1$ such that this scale is not ignored and we have $B_{r^s_k}(p^s_k)\subset B_{Cr^h_k}(p^h_k)$ for some $C<\infty$.

\textbf{Blowing up: The first bubble.} Now consider the first scale $(r_k^1, p_k^1)$; notice that $p_k^1\to y$, so by our choice of $\delta$ we can pick normal coordinates for $N$ centred at $p_k^1$ on a geodesic ball of radius $\delta$. We now consider $M_k\cap B_{\delta}(p_k)$ -- or equivalently $M_k \cap B^{\R^{n+1}}_{\delta}(0)$, where the metric on $N$ in these coordinates is given as $g_k = g_0 + O_k(|x|^2)$. Here, $g_0$ denotes the Euclidean metric. Notice that we actually have $O_k(|x|^2) = O(|x|^2)$. Now consider the sequence 
$$\tilde{M}^1_k : = \frac{1}{r^1_k} (M_k\cap B_\delta) \subset \Big(B^{\R^{n+1}}_{\delta/r^1_k}(0), \tilde{g}_k\Big)$$ 
where $\tilde{g}_k = g_0 + (r_k^1)^2 O(|x|^2)$. By the choice of $r^1_k$ we have that $\tilde{M}^1_k$ is stable inside every ball of radius $\frac12$ in $B^{\R^{n+1}}_{\delta/r^1_k}$, and moreover 
\begin{equation}\label{mon}
\frac{\h^n(\tilde{M}^1_k \cap B_R)}{R^n} =  \frac{\h^n(M_k\cap B_{Rr^1_k}(p^1_k))}{(Rr^1_k)^n} \leq C
\end{equation}
 for some uniform constant $C$. This follows from the monotonicity formula for $N$ which holds at small scales (see Section \ref{section:localcomp} and e.g. \cite[(1.18)]{ScS81}).
   
Thus, by setting 
\begin{equation*}
U_k=B^{\R^{n+1}}_{\delta/r^1_k}(0), 
\end{equation*}
we have that $\{U_k\}$ is a nested exhaustion of $\R^{n+1}$ and that $0\in \tilde{M}^1_k \in \mathcal{M}(\Lambda,I, U_k)$. So by the local compactness theory, or more precisely Corollary \ref{cor:Rn}, there exists $\Sigma_1 \in \mathcal{M}(\Lambda, I, \R^{n+1})$ such that $\tilde{M}^1_k$ converges (up to a subsequence) smoothly and graphically to $\Sigma_1$ on any compact region $V\subset \R^{n+1}$ away from possibly a finite set $\mathcal{Y}$. But since $\tilde{M}^1_k$ is stable on every ball of radius less than $\frac12$, we must have that $\mathcal{Y} =\emptyset$ and therefore the convergence is locally smooth and of multiplicity one on every such $V$. Moreover $\tilde{g}_k \to g_0$ smoothly and uniformly on every $V$, so that $\mathcal{A}(\Sigma_1)<\infty$ by a result of Tysk \cite{T89}.

In particular we have (by the scale-invariance of the total curvature)
\begin{align*}
\lim_{R\to \infty} \lim_{k\to \infty} \int_{M_k\cap B_{Rr^1_k}(p^1_k)} |A_k|^n \id V_{M_k} &= \lim_{R\to \infty} \lim_{k\to \infty} \int_{\tilde{M}^1_k\cap B_{R}(0)} |\tilde{A}^1_k|^n \id V_{\tilde{M}^1_k}\\
&= \lim_{R\to \infty} \int_{\Sigma_1 \cap B_R(0)} |A|^n \id V_{\Sigma_1}\\
&= \mathcal{A}(\Sigma_1).
\end{align*}

\begin{remark}\label{remark:Sigma1}
By Corollary \ref{nontriv}, we know that $\Sigma_1$ cannot be trivial. We also point out that when $n=2$, $\Sigma_1$ must be connected. This follows since if it is disconnected, then we get convergence to two complete minimal hypersurfaces with finite total curvature which do not intersect -- but this contradicts the half-space theorem \cite{HM90}. If $n\geq 3$, it is conceivable that $\Sigma_1$ is disconnected (since even non-trivial minimal hypersurfaces lie in slabs) -- however the number of components is clearly bounded by the volume growth and the end result is that we still have finite total curvature. 
\end{remark}

\textbf{The second bubble.} Now, we need to worry about all the other sequences. The following ideas are standard in bubbling theory, but for convenience of the reader we explain the main steps. This is the point where we avoid all of the marked point-scale sequences and re-label the remaining ones to be  consecutive $1,\dots , \ell$. Moreover, by passing to a further subsequence (of the indices $k$) if necessary, we may assume that the limes superior in \eqref{eq.psinf} and \eqref{eq.psinf2} is actually a limit. In particular, for $(r_k^2, p_k^2)$ we must have 
\begin{equation}\label{step1}
\lim_{k\to \infty} \bigg( \frac{r^2_k}{r^1_k} + \frac{dist_g(p_k^1,p_k^2)}{r^2_k} \bigg)= \infty.
\end{equation}

We have two cases. 

\textbf{Case 1: The second fraction in \eqref{step1} tends to infinity -- the bubbles are forming separately.} In this case, the reader can check that if we blow up for $(r_k^2, p_k^2)$ in a similar way as for $(r_k^1, p_k^1)$ above we get the same conclusion (the first bubble will disappear off at infinity). More precisely, let $\varrho_k:=  dist_g(p^1_k,p^2_k)$ and consider the geodesics ball $B_{\varrho_k}(p^2_k)$ with 
$$0\in\tilde{M}_k^2 = \frac{M_k}{r^2_k}\subset B^{\mathbb{R}^{n+1}}_{\varrho_k/r^2_k}(0) =: U_k.$$ 
Notice that $\varrho_k/r^2_k\to \infty$ and thus following the argument above, there exists some minimal and embedded $\Sigma_2 \subset (\R^{n+1},g_0)$ with finite total curvature such that 
\begin{align*}
\lim_{R\to \infty} \lim_{k\to \infty} \int_{M_k\cap B_{Rr^2_k}(p^2_k)} |A_k|^n \id V_{M_k} &= \lim_{R\to \infty} \lim_{k\to \infty} \int_{\tilde{M}^2_k\cap B_{R}(0)} |\tilde{A}^2_k|^n \id V_{\tilde{M}^2_k}\\
&=  \lim_{R\to \infty} \int_{\Sigma_2 \cap B_R(0)} |A|^n  \id V_{\Sigma_2}\\
&= \mathcal{A}(\Sigma_2).
\end{align*}
Again by Corollary \ref{nontriv} we know that $\Sigma_2$ is non-trivial and connected when $n=2$, and when $n\geq 3$ we know that it is possibly a collection of trivial and non-trivial minimal hypersurfaces, at least one of which is non-trivial. 

\textbf{Case 2: The second fraction in \eqref{step1} is bounded by some $C_0 <\infty$ -- the second bubble forms near the first.} In this case, the first fraction in \eqref{step1} tends to infinity. This time we blow up, centred at the first point, but now with the scale $(C_0+1)r^2_k$, i.e. we let
\begin{equation*}
\tilde{M}^2_k : = \frac{1}{(C_0+1)r^2_k}\, (M_k\cap B_\delta) \subset \Big(B^{\R^{n+1}}_{\delta/(C_0+1)r^2_k}(0), \tilde{g}_k\Big)
\end{equation*}
where $\tilde{g}_k = g_0 + (r_k^2)^2 O(|x|^2)$. By the choice of $(p^2_k,r^2_k)$ we have the existence of some $R<\infty$ so that $\tilde{M}^2_k$ is stable inside every ball of radius $\frac{1}{2(C_0+1)}$ in the annulus
\begin{equation*}
B^{\R^{n+1}}_{\delta/(C_0+1)r^2_k}(0)\setminus B^{\R^{n+1}}_{Rr^1_k/r^2_k}(0)
\end{equation*}
and has finite index. Again, we have 
\begin{equation}\label{mon}
\frac{\h^n(\tilde{M}^2_k \cap B_R)}{R^n} =  \frac{\h^n(M_k\cap B_{(C_0+1)Rr^2_k}(p^1_k))}{((C_0+1)Rr^2_k)^n} \leq C
\end{equation}
for some uniform constant $C$. By assumption we know that $r^1_k/r^2_k\to 0$. By the local compactness theory from Theorem \ref{thm:oldmain} we know that $\tilde{M}_k$ converges locally and smoothly to some limit $\Sigma_2$ away from the only possible bad point at the origin\footnote{Remember that a point is ``bad'' implies that the index coalesces at that point, thus by the choice of $r^2_k$ and $p^2_k$ this cannot happen anywhere except the origin.}. As $\Sigma_1$ is non-trivial, it must have more than one end, thus forcing the origin to be a genuine bad point of convergence on the $r^2_k$ scale. Furthermore, since $B_{r^2_k + (r^2_k)^2}(p^2_k)\in \mathfrak{C}^{\ell}_k$ for some $\ell$ then for some subsequence we have (for all $k$) either  $B_{2(r^2_k + (r^2_k)^2)}(p^2_k)\cap U^{\ell-1}_k = \emptyset$ or $B_{2(r^2_k + (r^2_k)^2)}(p^2_k)\cap U^{\ell-1}_k \neq \emptyset$ and there is some component $C_k$ of $M_k \cap B_{6(r^2_k + (r^2_k)^2)}(p^2_k)$ with $C_k \cap B_{r^2_k + (r^2_k)^2}(p^2_k)$ unstable.

In the former case, we must have that some component of $\tilde{M}_k^2$ converges smoothly and with multiplicity one to a component of $\Sigma_2$ which must be non-trivial. If not, then (letting $\tilde{p}^2_k$ denote $p^2_k$ at this scale) $\tilde{M}_k^2 \cap B_{2}(\tilde{p}^2_k)$ converges locally smoothy to planes (since $\tilde{p}^2_k$ must in this case be far from the origin and the convergence is smooth away from there). But one can use the proof of Corollary \ref{nontriv} to rule this out. 

In the latter we must have that $\tilde{C}_k$ converges smoothly and with multiplicity one to some non-trivial limit and again that $\Sigma_2$ contains some non-trivial component. To see this we observe that $\tilde{C}_k\cap B_{1/2}(p)$ must be stable for all $p\in \tilde{C}_k$. If not, then we have found a smaller ball in the admissible set which gives a contradiction. Then we may apply the proof of Corollary \ref{nontriv} to the component $\tilde{C}_k$. 

In either case the non-trivial component must be a bounded distance away from the origin at this scale, so by Corollary \ref{nontriv}, we must have $\lambda_1(B_{2r^2_k}(p^2_k)\setminus B_{Rr^1_k}(p^1_k))\to -\infty$ for any $R$.

When $n=2$, such other non-trivial components cannot exist by the half-space theorem \cite{HM90}, so the whole of $\Sigma_2$ must be trivial, a contradiction. This shows that it is impossible for Case 2 to occur in dimension $n=2$. 

In either case we find the following.
\begin{equation*}
\lim_{\delta_1\to 0}\lim_{R\to \infty}\lim_{k\to \infty} \int_{M_k\cap B_{Rr^1_k}} |A_k|^n \id V_{M_k}+ \int_{M_k\cap B_{Rr^2_k}\setminus B_{\delta_1 r^2_k}} |A_k|^n \id V_{M_k} = \mathcal{A}(\Sigma_1) + \mathcal{A}(\Sigma_2).  
\end{equation*}

Note that we have not yet controlled the term
\begin{equation*}
\lim_{\delta_1\to 0}\lim_{R\to \infty}\lim_{k\to \infty}\int_{M_k\cap B_{\delta_1 r^2_k}\setminus B_{Rr^1_k}} |A_k|^n \id V_{M_k}.
\end{equation*}
However, we will see that $M_k$ is a neck of order $(\eta (R,\delta_1), L(R,\delta_1))$ here, for some $\eta>0$ and $L\in \N$ as defined in Definition \ref{neckdef}. We will deal with neck regions like this in the following section, where we will see that the above limit actually vanishes.

In Case 2 above, we say that the bubbles $\Sigma_1$ and $\Sigma_2$ form a string (they are forming on each-other) centred at the point $p^1_k$ corresponding to the smallest scale.  

\textbf{Further bubbles.} We continue along our bubble sequence in the same fashion. Either $(r^3_k, p^3_k)$ is assigned to one of the previous bubbles (as in Case 2 above), or it splits off on its own (as in Case 1). Explicitly, we know that 
\begin{equation}\label{step2}
\lim_{k\to \infty} \bigg(\frac{r^3_k}{r^i_k} + \frac{dist_g(p^3_k, p^i_k)}{r^3_k} \bigg) = \infty
\end{equation}
for $i=1,2$. If we were in Case 1 above, then we now have two new subcases.

\textbf{Case 1.1: The second fraction in \eqref{step2} is infinite for both $i=1,2$ -- the bubbles are forming separately.} In this case, again the reader can check that we form a new bubble by following Case 1 above (except we blow up centred at $p^3_k$ at scale $r^3_k$). In particular there exists some $\Sigma_3\subset (\R^{n+1}, g_0)$ with finite total curvature and 
\begin{align*}
\lim_{R\to \infty} \lim_{k\to \infty} \int_{M_k\cap B_{Rr^3_k}(p^3_k)} |A_k|^n \id V_{M_k} &=\lim_{R\to \infty} \lim_{k\to \infty} \int_{\tilde{M}^3_k\cap B_{R}(0)} |\tilde{A}^3_k|^n \id V_{\tilde{M}^3_k}\\
&=  \lim_{R\to \infty} \int_{\Sigma_3 \cap B_R(0)} |A|^n  \id V_{\Sigma_3}\\
&= \mathcal{A}(\Sigma_3).
\end{align*}

Thus, we have 
$$\sum_{i=1}^3 \lim_{R\to \infty} \lim_{k\to \infty} \int_{M_k\cap B_{Rr^i_k}(p^i_k)} |A_k|^n \id V_{M_k} = \sum_{i=1}^3 \mathcal{A}(\Sigma_i).$$

\textbf{Case 1.2: The second fraction in \eqref{step2} is bounded by $C_0<\infty$ for one of $i=1,2$ -- the third bubble is forming on $\Sigma_i$.} In this case, we follow Case 2 above; i.e. we blow up about the first point in this string (in this case $p^i_k$) at a scale $r^3_k$ by letting
$$\tilde{M}^3_k : = \frac{1}{(C_0+1)r^3_k} \, M_k \subset \Big(B^{\R^{n+1}}_{\delta/(C_0+1)r^3_k}(0), \tilde{g}_k\Big)$$ where $\tilde{g}_k = g_0 + (r_k^3)^2 O(|x|^2)$. 
Again, we know that $\tilde{M}^3_k$ is stable inside every ball of radius $\frac{1}{2(C_0+1)}$ in the annulus
$$B^{\R^{n+1}}_{\delta/(C_0+1)r^3_k}(0)\setminus B^{\R^{n+1}}_{Rr^i_k/r^3_k}(0),$$ 
for some $R$ and it has finite index. Moreover, we again have 
\begin{equation}\label{mon}
\frac{\h^n(\tilde{M}^3_k \cap B_R)}{R^n} =  \frac{\h^n(M_k\cap B_{R(C_0+1)r^3_k}(p^1_k))}{((C_0+1)Rr^2_k)^n} \leq C
\end{equation}
for some uniform constant $C$. By assumption, we know that $r^i_k/r^3_k\to 0$. By the compactness result from Corollary \ref{cor:Rn}, we know that $\tilde{M}_k$ converges locally and smoothly to some limit $\Sigma_3$ away from the only possible bad point at the origin. By an argument as in Case 2 above, we get first that this cannot happen when $n=2$, and the existence of some non-trivial $\Sigma_3$ with $\lambda_1(B_{2r^3_k}(p^3_k)\setminus (B_{Rr^2_k}(p^i_k))\to -\infty$ by Corollary \ref{nontriv} and the following 
\begin{equation*}
\lim_{\delta_2\to 0}\lim_{R\to \infty}\lim_{k\to \infty} \int_{M_k\cap B_{Rr^i_k}} |A_k|^n \id V_{M_k} + \int_{M_k\cap B_{Rr^3_k}\setminus B_{\delta_2 r^i_k}} |A_k|^n \id V_{M_k} = \mathcal{A}(\Sigma_i) + \mathcal{A}(\Sigma_3).  
\end{equation*}
Therefore 
\begin{multline*}
\lim_{\delta_2\to 0}\lim_{R\to \infty}\lim_{k\to \infty}\bigg( \int_{M_k\cap B_{Rr^i_k}} |A_k|^n \id V_{M_k} + \int_{M_k\cap B_{Rr^3_k}\setminus B_{\delta_2 r^i_k}} |A_k|^n \id V_{M_k}\\
+\int_{M_k\cap B_{Rr^j_k}(p^j_k)} |A_k|^n \id V_{M_k} \bigg) = \sum_{i=1}^3 \mathcal{A}(\Sigma_i),
\end{multline*}
where $i\neq j$ on the left hand side. Of course, we have again not yet dealt with 
$$\lim_{\delta_2\to 0}\lim_{R\to \infty}\lim_{k\to \infty}\int_{M_k\cap B_{\delta_2 r^3_k}\setminus B_{Rr^i_k}} |A_k|^n \id V_{M_k},$$
but as before, we will see that $M_k \cap \big(B_{\delta_2 r^3_k}\setminus B_{Rr^i_k}\big)$ is a neck of order $(\eta, L)$ for some $\eta>0$ and $L\in\N$. Such regions are investigated in the next section.
 
If instead we were in Case 2 above, then we also have two new subcases.

\textbf{Case 2.1: The second fraction in \eqref{step2} is infinite for both $i=1,2$ -- the third bubble forms separately from the first two.} In this case, follow Case 1.1 above. By blowing up at scale $r^3_k$, we find again that
\begin{align*}
\lim_{R\to \infty} \lim_{k\to \infty} \int_{M_k\cap B_{Rr^3_k}(p^3_k)} |A_k|^n \id V_{M_k}  &=\lim_{R\to \infty} \lim_{k\to \infty} \int_{\tilde{M}^3_k\cap B_{R}(0)} |\tilde{A}^3_k|^n \id V_{\tilde{M}^3_k} \\
& =  \lim_{R\to \infty} \int_{\Sigma_3 \cap B_R(0)} |A|^n  \id V_{\Sigma_3}\\
&= \mathcal{A}(\Sigma_3). 
\end{align*}
Thus 
\begin{multline*}
\lim_{\delta_1\to 0}\lim_{R\to \infty}\lim_{k\to \infty}\bigg( \int_{M_k\cap B_{Rr^3_k}} |A_k|^n \id V_{M_k} + \int_{M_k \cap B_{Rr^2_k}\setminus B_{\delta_1 r^1_k}} |A_k|^n \id V_{M_k} \\
+\int_{M_k\cap B_{Rr^1_k}(p^1_k)} |A_k|^n \id V_{M_k} \bigg) = \sum_{i=1}^3 \mathcal{A}(\Sigma_i),
\end{multline*} 
and we will still have to deal with 
$$\lim_{\delta_1\to 0}\lim_{R\to \infty}\lim_{k\to \infty}\int_{M_k \cap B_{\delta_1 r^2_k}\setminus B_{Rr^1_k}} |A_k|^n \id V_{M_k},$$
which as always is a neck region which will be dealt with in the next section.

\textbf{Case 2.2: The second fraction in \eqref{step2} is finite for one of $i=1,2$ -- the third bubble forms on one of the previous ones.} In this case, we must actually have that the second term in \eqref{step2} is finite for \emph{both} $i=1,2$: If it is finite for $i=1$ then for $i=2$ we have (remember that $r^3_k \geq r^2_k$ by assumption)
$$\frac{dist_g(p^3_k, p^2_k)}{r^3_k} \leq \frac{dist_g(p^3_k, p^1_k)}{r^3_k} + \frac{dist_g(p^1_k, p^2_k)}{r^3_k} \leq \frac{dist_g(p^3_k, p^1_k)}{r^3_k} + \frac{dist_g(p^1_k, p^2_k)}{r^2_k}\leq C <\infty.$$
Also, if it is finite for $i=2$ then when $i=1$ we have
$$\frac{dist_g(p^3_k, p^1_k)}{r^3_k} \leq \frac{dist_g(p^3_k, p^2_k)}{r^3_k} + \frac{dist_g(p^1_k, p^2_k)}{r^3_k} \leq \frac{dist_g(p^3_k, p^2_k)}{r^3_k} + \frac{dist_g(p^1_k, p^2_k)}{r^2_k}\leq C <\infty.$$
Note that now we must have both $r^3_k/r^2_k\to \infty$ and $r^3_k/r^1_k\to \infty$.

Similarly as in Case 2, blow up centred at the first point in the string (in this case $p^1_k$) at a scale $r^3_k$. We leave it as an exercise to check that this is impossible when $n=2$, otherwise $\Sigma_3$ is non-trivial with $\lambda_1(B_{2r^3_k}(p^3_k)\setminus B_{Rr^2_k}(p^1_k))\to -\infty$ and
\begin{multline*}
\lim_{\delta_1\to 0 }\lim_{R\to \infty}\lim_{k\to \infty}\bigg( \int_{M_k\cap B_{Rr^1_k}} |A_k|^n \id V_{M_k} + \int_{M_k\cap B_{Rr^2_k}\setminus B_{\delta_1 r^2_k}} |A_k|^n \id V_{M_k}\\
+\int_{M_k\cap B_{Rr^3_k}\setminus B_{\delta_1 r^3_k}} |A_k|^n \id V_{M_k} \bigg) = \sum_{i=1}^3 \mathcal{A}(\Sigma_i).  
\end{multline*} 

Note that this time, we are still to deal with two regions, namely with
$$\lim_{\delta_1\to 0}\lim_{R\to \infty}\lim_{k\to \infty} \bigg(\int_{M_k\cap B_{\delta_1 r^2_k}\setminus B_{Rr^1_k}} |A_k|^n \id V_{M_k} + \int_{M_k\cap B_{\delta_1 r^3_k}\setminus B_{Rr^2_k}} |A_k|^n \id V_{M_k} \bigg).$$
Both regions are neck regions as above, which we will deal with in the following section.

We can continue down this route for finitely many steps, until we have exhausted all point-scale sequences. Each new scale is either a bubble forming on one of the previous strings of bubbles, or it is occurring on its own scale. Each time we are accounting for all the total curvature \emph{except} on the neck regions. 

If we take a distinct point-scale sequence $(q_k,\varrho_k)$ as in the theorem, then if we blow up at this scale and we end up with something non-trivial in the limit, then we must have captured some more coalescing index in an admissible ball, but this cannot happen since we have exhausted all such regions in the process. 

Therefore Part 1 of Theorem \ref{thm:bubbles} is proved.
\end{proof}

%------------------------------------------------------------------
\section{The neck analysis}\label{NA}
In this section we finish the proof of our main result, Theorem \ref{thm:bubbles} Part 2 (and therefore also Theorem \ref{thm:main}), by showing that the neck regions which we have not yet dealt with in our bubbling argument do not contribute to the total curvature. More precisely, we prove the following lemma.
\begin{lemma}\label{necklemma}
Each neck region encountered in the above bubbling argument is of the form $M_k \cap \big(B_{\delta r^i_k}(p^j_k)\setminus B_{Rr^j_k}(p^j_k)\big)$, with $r^i_k/r^j_k \to \infty$, and we have
\begin{equation}\label{eq:neck}
\lim_{\delta\to 0}\lim_{R\to \infty}\lim_{k\to \infty}\int_{M_k \cap B_{\delta r^i_k}(p^j_k)\setminus B_{Rr^j_k}(p^j_k)} |A_k|^n \id V_{M_k} = 0.
\end{equation}
\end{lemma}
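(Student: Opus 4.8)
The plan is to exploit the two defining properties of a neck region of order $(\eta, L)$: the uniform graphicality with small gradient, and the smallness of the \emph{dyadic} total curvature $\int_{M_k\cap(B_{2\varrho}\setminus B_\varrho)}|A_k|^n$. First I would verify that the regions left over from the bubbling argument really are necks in the sense of Definition \ref{neckdef}. By construction, for $\varrho$ in the range $Rr^j_k < \varrho < \delta r^i_k$, every ball $B_{\varrho/2}(q)$ with $q\in M_k$ is stable, so Remark \ref{ssregr} (Schoen-Simon, via Propositions \ref{ssreg}--\ref{ssvol}) gives $|A_k|\le C/\varrho$ on $B_{\varrho/4}(q)$; combined with the fact that each dyadic annulus contains points from only $L\le m$ leaves and the varifold convergence forces these to be nearly parallel to a fixed plane, one gets that $M_k\cap(B_{2\varrho}\setminus B_\varrho)$ is graphical with $\|\D u^i\|_{L^\infty}$ small once $R$ is large and $\delta$ small. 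The curvature bound $|A_k|\le C/\varrho$ also yields $\int_{M_k\cap(B_{2\varrho}\setminus B_\varrho)}|A_k|^n\,\ed V_{M_k}\le C(L,\Lambda)$ by the volume ratio bound, so $\eta$ is finite; the genuinely small quantity is each individual dyadic integral, which I will call $\eps_k(\varrho)$.

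The core is then a \emph{dyadic decay} estimate on a single annulus: on $B_{2\varrho}\setminus B_\varrho$, after rescaling to unit size, $M_k$ is $L$ graphs over an annulus in $\R^n$ with small gradient, and $\|A_k\|_{L^n}$ on that annulus is small. On such a graph the minimal surface equation is, to leading order, a small perturbation of the Laplace equation for the height functions $u^i$, and $|A_k|$ is comparable to $|\D^2 u^i|$ up to lower-order terms. An $\eps$-regularity / elliptic estimate (interior $L^n$ estimates for $\D^2 u^i$ in terms of the $L^n$ norm of $|A_k|$ on a slightly larger annulus, which is the kind of bound underlying the Schoen-Simon regularity theory and the Choi-Schoen-type arguments) gives
\begin{equation*}
\int_{M_k\cap(B_{3\varrho/2}\setminus B_{4\varrho/3})}|A_k|^n\,\ed V_{M_k} \le C\Big(\int_{M_k\cap(B_{2\varrho}\setminus B_{\varrho})}|A_k|^n\,\ed V_{M_k}\Big)^{1+\sigma}
\end{equation*}
for some $\sigma>0$ (or at least a contraction factor $<1$), provided $\eps_k(\varrho)$ and the gradient are below a threshold. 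Iterating this across the dyadic scales between $Rr^j_k$ and $\delta r^i_k$ — there are $\sim\log_2(\delta r^i_k/(Rr^j_k))$ of them — the total curvature of the neck is dominated by a geometric-type series whose terms are controlled by the curvature at the two ends, i.e. by the ``annular'' contributions near $B_{Rr^j_k}$ and near $B_{\delta r^i_k}$. Those two boundary contributions tend to $0$ as $R\to\infty$ (outer part of the $\Sigma_j$-bubble, whose total curvature is finite by Tysk) and as $\delta\to 0$ (inner part near the next scale, again finite total curvature), in the iterated limit.

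I expect the main obstacle to be the presence of \emph{several leaves} $L>1$ in the neck: one must rule out that the graphs $u^i$ spread apart or that curvature concentrates ``between'' sheets in a way not seen by any single graph. This is handled by the construction — by the stopping-time definition of the point-scale sequences, every sub-ball in the neck is stable, so Schoen-Simon applies uniformly to \emph{each} sheet and forbids curvature concentration; the dyadic smallness $\eps_k(\varrho)\to 0$ then has to be extracted from the fact that if it failed along some sequence of scales $\varrho_k$, rescaling there would produce a non-flat complete minimal hypersurface in $\R^{n+1}$ with a point of positive index or multiplicity, contradicting that we have exhausted all admissible balls (exactly the mechanism in Corollary \ref{nontriv} and the final paragraph of the proof of Theorem \ref{thm:bubbles} Part 1). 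Once that smallness is in hand, the elliptic iteration is routine. A secondary technical point is that the ambient metric is not flat, but since $\tilde g_k = g_0 + (r^j_k)^2 O(|x|^2)$ on the relevant balls, the metric error is negligible on every dyadic annulus of the neck and only perturbs the constants.
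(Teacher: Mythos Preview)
Your verification that the neck regions satisfy Definition \ref{neckdef} with $\eta\to 0$ is essentially correct and matches the paper's Claim 1: stability on all sub-balls plus Schoen--Simon gives $|A_k|\le C/\varrho$, and the contradiction blow-up at an offending scale $\varrho_k$ rules out $\eps_k(\varrho_k)\not\to 0$. The gap is in your ``core'' step. The inequality you write,
\[
\int_{M_k\cap(B_{3\varrho/2}\setminus B_{4\varrho/3})}|A_k|^n \;\le\; C\Big(\int_{M_k\cap(B_{2\varrho}\setminus B_{\varrho})}|A_k|^n\Big)^{1+\sigma},
\]
is an interior estimate on a \emph{sub}-annulus of a single dyadic annulus; it does not relate adjacent dyadic scales and therefore cannot be iterated across the neck. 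Even granting $\eps_k(\varrho)\to 0$ uniformly in $\varrho$, you are summing over $\sim\log_2(\delta r^i_k/(Rr^j_k))\to\infty$ disjoint annuli, and $(\text{small})\times\infty$ is not controlled. To get a genuine geometric series you would need a three-annulus/log-convexity inequality linking $\eps_k(\varrho/2),\eps_k(\varrho),\eps_k(2\varrho)$, and nothing in your outline produces one.

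The paper avoids iteration entirely. It works on a single sheet $u_k$ over the whole neck, uses the minimal-graph equation to get $|\Delta_{\hat g_k}u_k|\le C\eta^2|\hat u_k|$, and applies one global Calder\'on--Zygmund estimate to $\phi_k u_k$ with a cutoff supported on the two boundary annuli. After absorption this leaves terms of the form $\int |\D^2\phi_k|^n|u_k|^n$ and $\int |\D\phi_k|^n$, and the first of these requires an $L^\infty$ bound on the \emph{height} $u_k$ --- not merely on $|\D u_k|$ --- near the inner and outer boundaries. That bound is the content of the paper's Claim 2: near the inner boundary one rescales by $r^j_k$ and invokes Schoen's description of the ends of finite-total-curvature minimal hypersurfaces in $\R^{n+1}$ to get $|u_k|\le Cr^j_k\log R$ (when $n=2$) or $|u_k|\le Cr^j_k$ (when $n\ge 3$), and near the outer boundary the smooth convergence to the plane gives $|u_k|\le Cr^i_k\delta^2$. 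Plugging these into the cutoff terms yields $\int_{\text{neck}}|A_k|^n\le C(\log R)^n/R^n + C\delta^n + C\eta$, which vanishes in the iterated limit. Your proposal never introduces or uses a height bound, and without it the $\|(\D^2\phi_k)u_k\|_{L^n}$ term (or its dyadic analogue) is exactly what prevents the argument from closing.
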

\begin{proof}
By an induction argument, we can assume that on such regions we know that  
$$\lim_{R\to \infty}\lim_{k\to \infty}\int_{M_k \cap Rr^j_k(p^j_k)} |A_k|^n \id V_{M_k} = \sum_\ell \mathcal{A}(\Sigma_\ell) \neq 0,$$
or in other words that all of the total curvature is accounted for on the inner region. In particular this extra hypothesis is true for the first neck region in a string, and once we show \eqref{eq:neck} on this first neck, then this hypothesis will be true for the next neck region and so on.

By rescaling, we can consider (letting $\tilde{M}_k = \tilde{M}_k^i = \frac{1}{r^i_k}M_k$ and $s_k = r^j_k/r^i_k \to 0$) 
$$\lim_{\delta \to 0}\lim_{R\to \infty}\lim_{k\to \infty}\int_{\tilde{M}_k \cap B_{\delta}\setminus B_{Rs_k}} |\tilde{A}_k|^n \id V_{\tilde{M}_k}.$$
We then first claim the following.

\textbf{Claim 1:} $\tilde{M}_k\cap B_{\delta}$ converges as a varifold to a plane as $k\to \infty$. Moreover $\tilde{M}_k$ is a neck of order $(\eta, L)$ as in Definition \ref{neckdef} with
$$\lim_{\delta\to 0}\lim_{R\to \infty}\lim_{k\to \infty} \eta \to 0.$$ 
Thus, after a rotation, $\tilde{M}_k\cap (B_{\delta_1 }\setminus B_{Rs_k})$ is a graph over the plane $\{x^{n+1}=0\}$ with a slope $\eta$ which can be made arbitrarily small for $\delta$ small enough and $R$, $k$ large enough. 

\begin{proof}[Proof of Claim 1] 
First of all, when $n=2$, the whole of $\tilde{M}_k$ converges to a plane, and when $n\geq 3$ we can choose $\delta$ small enough so that the part of $\tilde{M}_k$ inside $B_{\delta}$ converges to a plane. These statements follow since, when $n=2$, we cannot have a bubble forming on another bubble, so we must have bad convergence at the origin at this scale and therefore we converge to a plane passing through the origin by Corollary \ref{cor:Rn}. When $n\geq 3$ on the other hand, we know that $\tilde{M}_k$ converges to a collection of planes and non-trivial minimal hypersurfaces, hence all of the non-trivial hypersurfaces at this scale are therefore some bounded distance away from the origin -- else they are trivial. Thus any part of $\tilde{M}_k$ which is close enough to the origin must converge to a plane passing through the origin. 

Now rotate so that this limit plane coincides with $\{x^{n+1} = 0\}$. For a contradiction  suppose that $\eta \not\to 0$, in which case there exists some sequence $t_k \to 0$ such that $t_k/s_k \to \infty$ and 
$$\lim_{k\to \infty} \int_{\tilde{M}_k \cap(B_{2t_k}\setminus B_{t_k})} |\tilde{A}_k|^n \id V_{\tilde{M}_k}\neq 0.$$
The reader can check that we can blow up again at the scale $t_k$, i.e. we set $\tilde{\tilde{M}}_k = \frac{1}{t_k} \tilde{M}_k$. We must have a point of bad convergence at the origin, thus $\tilde{\tilde{M}}_k$ converges locally smoothly and graphically to a plane away from the origin -- we will show below that this plane must coincide with $\{x^{n+1} =0 \}$. In particular 
$$\lim_{k\to \infty} \int_{\tilde{M}_k \cap(B_{2t_k}\setminus B_{t_k})} |\tilde{A}_k|^n \id V_{\tilde{M}_k} = \lim_{k\to \infty} \int_{\tilde{\tilde{M}}_k\cap(B_{2}\setminus B_{1})} |\tilde{\tilde{A}}_k|^n \id V_{\tilde{\tilde{M}}_k}= 0.$$
Now we assume that the limit plane is independent of $\tilde{t}_k$ and always equals $\{x^{n+1}=0\}$. Notice that over $B_2\setminus B_1$, $\tilde{\tilde{M}}_k$ is uniformly a graph over the limit plane with slope converging to zero (for $k$ sufficiently large, i.e. when $\delta$ is small enough and $R$, $k$ are large enough). Thus the same argument implies that we are graphical with uniformly bounded slope over the whole neck -- and even that the slope converges to zero in this limit. In order to finish the proof of Claim 1 we check that the limit planes in this blow-up are always equal to $\{x^{n+1}=0\}$.

To see this we use a foliation argument similar to that in \cite[claim 6]{Sha15} which uses the ideas in \cite[pp 253 -- 256]{W87}. In particular we know that for all fixed $\tau >0$, on $B_{\delta_1}\setminus B_{\tau}$ each component of $\tilde{M}_k$ converges smoothly and graphically to $\{x^{n+1}=0\}$. Without loss of generality consider one of these graphs, $u_k$ and in particular the component $\mathcal{C}_k$ connected to this graph on $B_{\delta_1}\setminus B_{Rs_k}$. Fix $\tau<\delta_1/2$ and consider the cylinder $C_{2\tau}$ over $B_{2\tau}\cap \{x^{n+1}=0\}$. As in \cite[claim 6]{Sha15} we can foliate an open neighbourhood of $C_{2\tau}\cap\{x^{n+1}=0\}$ by minimal graphs $v_k^h$ such that we have
$$\text{$v_k^h = u_k + h$ on $\partial(C_{2\tau}\cap \{x^{n+1}=0\})$.}$$
Also, 
$$\text{$\|v_k^h-h\|_{C^{2,\alpha}}\leq K \|u_k\|_{C^{2,\alpha}(\partial(C_{2\tau}\cap \{x^{n+1}=0\}))}\leq K\varepsilon_k \to 0$ as $k\to \infty$.}$$  
Similarly as is \cite[Lemma 3.1]{W15} we can define a diffeomorphism of this cylindrical region (via its inverse)
$$F^{-1}_k(x^1,\dots,x^n, y) = (x^1,\dots, x^n, v_k^{y+h_k}(x^1,\dots, x^n))$$
where $h_k \to 0$ is uniquely chosen so that $v_k^{h_k}(0,\dots ,0) = 0.$ Notice that $F_k \to Id$ as $k\to \infty$ in $C^2$. 
 
We now work with these new coordinates $(x^1,\dots, x^n, y)$, on which horizontal slices ($\{y = c\}$) provide a minimal foliation, and furthermore in these coordinates, the part of $\tilde{M}_k$ described by $u_k$ takes a constant value $-h_k$ at the boundary of $C_{2\tau}$. Without loss of generality (by perhaps choosing a sub-sequence) we assume that $-h_k \geq 0$ for all $k$ (if $-h_k \leq 0$ the proof is similar). Now, suppose to the contrary that there exists a sequence $t_k$ whose limit plane is different from $\{x^{n+1} = 0\}$. In this case, and in these coordinates, for all $k$ sufficiently large there exists part of $\mathcal{C}_k$ in the lower half space $\{y<0\}$. This clearly violates the maximum principle -- there exists a slice $\{y=c\}$ such that $\mathcal{C}_k$ touches this slice and locally lies on one side of it. 

The proof of Claim 1 is complete. 
\end{proof}

When continuing with the proof of Lemma \ref{necklemma}, we note that we need to prove \eqref{eq:neck} only for a single sheet of the graph (of which there are finitely many). So let $z^i = x^i|_{x^{n+1} = 0}$ be coordinates on $\{x^{n+1}=0\}$ and pick the function $u_k:B_{\delta r^i_k}\setminus B_{R r^j_k}\cap \{x^{n+1}=0\}\to (\R^{n+1},g_k)$ describing this sheet, noting that we have $g_k = g_0 + (r^i_k)^2 O(|x|^2)$ and $|\D u_k|\leq \eta$. Letting $\hat{u}_k(z_1,\dots, z_n) = (z^1, \dots, z^n, u_k(z^1,\dots, z^n))$, we define $\hat{g}_k = \hat{u}_k^\ast g_k.$ The reader can check easily that there exists some uniform $C$ such that 
\begin{equation*}
|g_k(\hat{u}_k)_{ij} - \delta_{ij}| + |g_k(\hat{u}_k)^{ij} - \delta^{ij}|\leq C\cdot O(|\hat{u}_k|^2)
\end{equation*}
as well as 
\begin{equation*}
|(\Gamma_k)^i_{jl}(\hat{u}_k)|\leq C\cdot O( |\hat{u}_k|),
\end{equation*}
where $(\Gamma_k)^i_{jl}$ are the Christoffel symbols of $g_k$. From this, a short computation shows that 
\begin{equation}\label{eq:ghat1}
|\hat{g}_k(z)_{\alpha\beta} - \delta_{\alpha\beta}| + |\hat{g}_k(z)^{\alpha\beta} - \delta^{\alpha\beta}| \leq C\cdot\big((1+\eta)O(|\hat{u}_k|^2) + \eta^2\big)
\end{equation}
and
\begin{equation}\label{eq:ghat2}
|(\hat{\Gamma}_k)^\gamma_{\alpha\beta}(z)|\leq C\cdot \big((1+\eta)O(|\hat{u}_k|) + (O(|\hat{u}_k|^2) + \eta) |\D^2 u_k|\big),
\end{equation}
where $(\hat{\Gamma}_k)^\gamma_{\alpha\beta}$ are the Christoffel symbols of $\hat{g}_k$. Here, $|\D^2 u_k|$ is just the Euclidean size of the Euclidean Hessian of $u_k$. Finally, we also have $\ed V_{\hat{g}_k} = \sqrt{det(\hat{g}_k)} \ed z^1 \dots \ed z^n$ with
\begin{equation*}
\frac{1}{C}\leq G_k = \sqrt{det(\hat{g}_k)}\leq C.
\end{equation*}
In coordinates, the second fundamental form is given by
$$(A_k)^i_{\alpha\beta}(z) = \ppl{\hat{u}_k^i}{x^\alpha}{x^\beta} - (\hat{\Gamma}_k)^{\gamma}_{\alpha\beta}\pl{\hat{u}^i}{x^\gamma}$$ and thus using the above, we obtain
\begin{equation}\label{eq:AHess}
|A_k|^n_{g_k, \hat{g}_k} \leq C\cdot \big(|\D^2 u_k|^n + \eta^n O(|\hat{u}_k|^n)\big)
\end{equation}
and 
\begin{equation}\label{eq:HessA}
|\D^2 u_k|^n \leq C\cdot\big(|A_k|^n_{g_k, \hat{g}_k} + \eta^n O(|\hat{u}_k|^n)\big)
\end{equation}
Since we now know that
$$\hat{u}_k : (B_{\delta r^i_k}\setminus B_{Rr^j_k}\cap \{x^{n+1}=0\}, \hat{g}_k)\to (B_{\delta r^i_k}\setminus B_{R r^j_k}, g_k)$$ 
is a minimal isometric immersion, the tension field vanishes, and in particular 
\begin{equation}\label{eq:Laplace}
|\Delta_{\hat{g}_k} u_k| =  \Big| \hat{g}_k^{\alpha\beta} \Gamma_k(\hat{u}_k)^{n+1}_{jl} \pl{\hat{u}_k^j}{x^\alpha}\pl{\hat{u}_k^l}{x^\beta} \Big| \leq C\eta^2 |\hat{u}_k|.
\end{equation}
We also introduce the notation
$$f_k : = \hat{g}_k^{\alpha\beta} \Gamma_k(\hat{u}_k)^{n+1}_{jl} \pl{\hat{u}_k^j}{x^\alpha}\pl{\hat{u}_k^l}{x^\beta}.$$

Now, given any $\phi_k\in W_0^{2,n}\big(B_{\delta r^i_k}\setminus B_{Rr^j_k}\cap \{x^{n+1}=0\}\big)$, a standard Calderon-Zygmund estimate (see e.g. \cite[Corollary 9.10]{gt}) tells us that there exists some uniform $C$ with 
\begin{equation}\label{CZ}
\|\D^2(\phi_k u_k)\|_{L^n} \leq C\|\Delta(\phi_k u_k)\|_{L^n}.
\end{equation}
A short computation shows that
\begin{align*}
\Delta(\phi_k u_k) &= \Delta(\phi_k u_k) - \Delta_{\hat{g}_k} (\phi_k u_k) +  \Delta_{\hat{g}_k} (\phi_k u_k)\\
&= \delta^{\alpha\beta}\ppl{(\phi_k u_k)}{x^\alpha}{x^\beta} - \frac{1}{G_k}\pl{}{x^\alpha}\left(\hat{g}_k^{\alpha\beta}G_k \pl{(\phi_k u_k)}{x^\beta}\right) +  \Delta_{\hat{g}_k} (\phi_k ) u_k \\
&\qquad + \phi_k  \Delta_{\hat{g}_k} (u_k) + 2\langle \ed \phi_k, \ed u_k \rangle_{\hat{g}_k}\\
&= (\delta^{\alpha\beta}-\hat{g}_k^{\alpha\beta})\ppl{(\phi_k u_k)}{x^\alpha}{x^\beta} + \hat{g}_k^{\alpha\beta}(\hat{\Gamma}_k)^\gamma_{\alpha\beta} \pl{u_k}{x^\gamma} \phi_k +\hat{g}_k^{\alpha\beta}\ppl{\phi_k}{x^\alpha}{x^\beta} u_k\\
&\qquad + \phi_k f_k + 2\langle \ed \phi_k, \ed u_k \rangle_{\hat{g}_k}.
\end{align*}
Thus, using $|\D u_k|\leq \eta$ and the control we have on the other terms from \eqref{eq:ghat1}, \eqref{eq:ghat2}, \eqref{eq:Laplace}, and assuming also that $\phi_k\leq 1$, we end up with 
\begin{align*}
|\Delta(\phi_k u_k)|^n &\leq C\big(\eta^{2n} + \|\hat{u}_k\|_{L^{\infty}}^{2n}\big) |\D^2(\phi_k u_k)|^n +C\eta^n \|\hat{u}_k\|_{L^{\infty}}^n\\
&\qquad + C\eta^n\big(\eta^n + \|\hat{u}_k\|_{L^{\infty}}^n\big) |\D^2 u_k|^n + C|\D^2 \phi_k|^n |u_k|^n\\
&\qquad+ C\eta^{2n} \|\hat{u}_k\|_{L^{\infty}}+ C\eta^n |\D \phi_k|^n.
\end{align*}
Combining this with \eqref{CZ}, we get (for $\delta$ sufficiently small and $R$, $k$ sufficiently large) 
\begin{equation}\label{3}
\begin{aligned}
\|\D^2(\phi_k u_k)\|_{L^n}^n &\leq C\|\Delta(\phi_k u_k)\|_{L^n}^n \\
&\leq \frac12 \|\D^2(\phi_k u_k)\|^n_{L^n} + C\eta^n\delta^n (r^i_k)^n \|\hat{u}_k\|_{L^{\infty}}^n\\
&\qquad+ C\eta^n(\eta^n + \|\hat{u}_k\|_{L^{\infty}}^n)\int_{\R^n} |\D^2 u_k|^n \id V_{g_0}\\
&\qquad+ C \int_{\R^n} |\D^2 \phi_k|^n|u_k|^n \id V_{g_0} +C\eta^n \int_{\R^n} |\D \phi_k|^n \id V_{g_0},
\end{aligned}
\end{equation}
where $\R^n$ denotes $\{x^{n+1}=0\}$. Therefore, using absorption and \eqref{eq:HessA}, we obtain
\begin{equation}\label{4}
\begin{aligned}
\|\D^2(\phi_k u_k)\|_{L^n}^n &\leq C\eta^n(\eta^n + \|\hat{u}_k\|_{L^{\infty}}^n)\int_{\R^n} |A_k|^n \id V_{\hat{g}_k} + C \int_{\R^n} |\D^2 \phi_k|^n|u_k|^n \id V_{g_0}\\
&\qquad +C\eta^n \int_{\R^n} |\D \phi_k|^n \id V_{g_0} + C\eta^n\delta^n (r^i_k)^n \|\hat{u}_k\|_{L^{\infty}}^n.
\end{aligned}
\end{equation}

We may always assume that $4R r^j_k < \delta r^i_k$ for any fixed $\delta$, $R$ by picking $k$ sufficiently large. We will also assume that $\phi_k\equiv 1$ on $B_{\frac12\delta r^i_k}\setminus B_{2Rr^j_k}\cap \{x^{n+1}=0\}$ so that, using \eqref{eq:AHess}, we get
\begin{equation}\label{1a}
\begin{aligned}
\int_{B_{\delta r^i_k}\setminus B_{Rr^j_k}} |A_k|^n \id V_{\hat{g}_k} &\leq C \int_{B_{\delta r^i_k}\setminus B_{Rr^j_k}} |\D^2 u_k|^n \id V_{g_0} + C\eta^n (\delta r^i_k)^n\\
&\leq C\int_{B_{\frac12\delta r^i_k}\setminus B_{2 Rr^j_k}} |\D^2(\phi_k u_k)|^n \id V_{g_0} \\
&\qquad + C\int_{B_{\delta r^i_k}\setminus B_{\frac12 \delta_1 r^i_k}} |\D^2 u_k|^n \id V_{g_0}\\
&\qquad +C\int_{B_{2 Rr^j_k}\setminus B_{Rr^j_k}} |\D^2 u_k|^n \id V_{g_0}  + C\eta^n (\delta r^i_k)^n.
\end{aligned}
\end{equation}
Using \eqref{4} for the first integral on the right hand side of \eqref{1a} while using \eqref{eq:HessA} for the other two integrals, we then obtain
\begin{equation}\label{1}
\begin{aligned}
\int_{B_{\delta r^i_k}\setminus B_{Rr^j_k}} |A_k|^n \id V_{\hat{g}_k} &\leq C\eta^n\big(\eta^n + \|\hat{u}_k\|_{L^{\infty}}^n\big)\int_{B_{\delta r^i_k}\setminus B_{Rr^j_k}} |A_k|^n \id V_{\hat{g}_k}\\
&\qquad + C \int_{\R^n} |\D^2 \phi_k|^n|u_k|^n \id V_{g_0}+ C\eta^n \int_{\R^n} |\D \phi_k|^n \id V_{g_0}\\
&\qquad + C\int_{B_{\delta_1 r^i_k}\setminus B_{\frac12\delta_1 r^i_k}} |A_k|^n \id V_{\hat{g}_k} +C\int_{B_{2 Rr^j_k}\setminus B_{Rr^j_k}} |A_k|^n \id V_{\hat{g}_k}\\
&\qquad + C\eta^n\delta^n (r^i_k)^n(1+ \|\hat{u}_k\|_{L^{\infty}}^n).
\end{aligned}
\end{equation}
Once again, when $\delta$ is sufficiently small, $R$ and $k$ sufficiently large we get, by absorbing the first term on the right hand side of \eqref{1} and noting that the total curvature on the dyadic annuli on the third line is bounded by $\eta$, 
\begin{equation}\label{eq:A1}
\begin{aligned}
\int_{B_{\delta_1 r^i_k}\setminus B_{Rr^j_k}} |A_k|^n \id V_{\hat{g}_k} &\leq C \int_{\R^n} |\D^2 \phi_k|^n|u_k|^n \id V_{g_0}\\
&\qquad + C\eta^n \int_{\R^n} |\D \phi_k|^n \id V_{g_0} + C\delta^n + C\eta.
\end{aligned}
\end{equation}
We will now work with a specific $\phi_k$ which we assume to be smooth with compact support, $\phi_k \equiv 1$ on $B_{\frac12\delta r^i_k}\setminus B_{2Rr^j_k}\cap \{x^{n+1}=0\}$ and 
\begin{equation}\label{eq:cutoff1}
|\D \phi_k(z)| \leq \twopartdef{\frac{C}{Rr^j_k}}{|z|\in [Rr^j_k, 2Rr^j_k],}{\frac{C}{\delta r^i_k}}{|z|\in [\frac12 \delta r^i_k,\delta r^i_k],}
\end{equation}
as well as
\begin{equation}\label{eq:cutoff2}
|\D^2 \phi_k|\leq \twopartdef{\frac{C}{R^2(r^j_k)^2}}{|z|\in [Rr^j_k, 2Rr^j_k],}{\frac{C}{\delta^2 (r^i_k)^2}}{|z|\in [\frac12 \delta r^i_k,\delta r^i_k].}
\end{equation}
Moreover, we also have the following claim about $u_k$ on the support of $\D^2\phi_k$.

\textbf{Claim 2:}
When $k$ is sufficiently large then 
\begin{equation*}
|u_k(z)|\leq  \fourpartdef{Cr^j_k\log R}{\text{$n=2$ and $|z|\in [Rr^j_k, 2Rr^j_k]$,}}{Cr^i_k \delta^2}{\text{$n=2$ and $|z|\in [\frac12 \delta r^i_k, \delta r^i_k]$,}}{Cr^j_k}{\text{$n\geq 3$ and $|z|\in [Rr^j_k, 2Rr^j_k]$,}}{C r^i_k\delta^2}{\text{$n\geq 3$ and $|z|\in [\frac12 \delta r^i_k, \delta r^i_k]$.}}
\end{equation*}

\begin{proof}[Proof of Claim 2]
In order to prove this claim, we do two further blow-up arguments. First of all, we consider $\tilde{M}_k^i : = \frac{1}{r^i_k} M_k$ and we notice that on $B_{\delta}$ we must have that $\tilde{M}_k^i$ converges smoothly and graphically to $\{x^{n+1} = 0\}$ away from the origin. In particular, for any $\delta$, we can set $k$ sufficiently large so that $|\tilde{u}^i_k| \leq C\delta^2$ for $z\in B_{\delta}\setminus B_{\frac12 \delta}$ where $\tilde{u}^i_k (z) = \frac{1}{r^i_k} u_k (r^i_k z)$. Thus, we have that $|u_k(z)|\leq Cr^i_k \delta^2$ for $z\in B_{\delta r^i_k}\setminus B_{\frac12 \delta r^i_k}$. 

For the second blow-up, we consider $\tilde{M}^j_k : =\frac{1}{r^j_k} M_k$. Now we know that $\tilde{M}^j_k$ converges locally smoothly and graphically to some limit surfaces $\Sigma_\ell$ away from the origin. If $\Sigma_\ell$ is non-trivial for some $\ell$, the convergence is smooth everywhere and single sheeted, else we are converging with multiplicity to a plane. We point out that by embeddedness we must have that the ends of $\Sigma_\ell$ are all parallel. Furthermore, we now claim that the ends are all parallel to $\{x^{n+1} = 0\}$. This follows since $\tilde{u}^j_k(z):=\frac{1}{r^j_k} u_k (r^j_k z)$ is the graph of some sheet of $\tilde{M}^j_k$. Moreover we know that this is converging smoothly to $\Sigma_\ell \cap B_{2R}\setminus B_R$ for any fixed $R$ as $k\to \infty$. But we know also that 
$$\lim_{\delta\to 0}\lim_{R\to\infty}\lim_{k\to \infty}\|\D \tilde{u}^j_k \|_{L^{\infty}} = 0,$$
so therefore all the ends of $\Sigma_\ell$ are parallel to $\{x^{n+1} = 0\}$. We can now use Schoen's description \cite{Sc83} of the ends of minimal hypersurfaces to conclude that, for $z\in B_{2R}\setminus B_R$ and $k$ sufficiently large:
\begin{equation*}
|\tilde{u}^j_k(z)|\leq \twopartdef{C\log R}{n=2}{C}{n\geq 3.}
\end{equation*}
This finishes the proof of Claim 2.
\end{proof}

Using Claim 2, and a test function $\phi_k$ as described above, i.e. in particular satisfying \eqref{eq:cutoff1} and \eqref{eq:cutoff2}, we obtain from the estimate \eqref{eq:A1}
\begin{equation*}
\int_{M_k\cap B_{\delta r^i_k}\setminus B_{Rr^j_k}} |A_k|^n \id V_{\hat{g}_k} \leq C\frac{(\log R)^n}{R^n} + C\delta^n + C\eta, 
\end{equation*}
and therefore by Claim 1 
$$\lim_{\delta\to 0}\lim_{R\to \infty}\lim_{k\to \infty}\int_{M_k\cap B_{\delta r^i_k}(p^j_k)\setminus B_{Rr^j_k}(p^j_k)} |A_k|^n=0.$$
This finishes the proof of the Lemma. \end{proof}

We can now prove the full version of the bubbling result which will also conclude the result of the main Theorem \ref{thm:main};

\begin{proof}[Proof of Theorem \ref{thm:bubbles} Part 2.]
We can now construct a cover of $B_{\delta}(y)$ in terms of bubble regions, neck regions and empty regions. We call a region \emph{empty} if it is defined by a distinct point-scale sequence $(s_k, q_k)$ (distinct in the sense that all other bubbles occur at different scales, moreover that $B_{s_k}(q_k)$ is disjoint from all the other bubble regions). We can check easily that the total curvature must converge to zero over such a region and we converge smoothly to a collection of planes; if not, when we re-scale we converge to something non-trivial, or we have a point of bad convergence, and have thus found a new region where we have index coalescing -- contradicting the fact that we have exhausted such regions. We leave the reader to do this as it is a standard argument. 

Thus Lemma \ref{necklemma} along with the proof of Theorem \ref{thm:bubbles} Part 1. completes all but the final statement. 
 
This last statement follows since when $k$ is sufficiently large, we have produced a covering whereby all the $M_k$'s are diffeomorphic to one another inside each element of the covering.
\end{proof}

%-------------------------------------------------------------------
\section{Curvature quantisation for $\mathcal{M}_p(\Lambda,\mu)$ and proofs of the corollaries from the introduction}
In this section, we will prove Theorem \ref{cor:lambda} as well as the two Corollaries \ref{cor:Betti} and \ref{cor:equivalence}.

To prove Theorem \ref{cor:lambda}, we will carry out the same bubbling argument as in the proof of Theorem \ref{thm:main} -- in fact, the main approach is the same, except we do not have an index bound and therefore it is not clear whether or not the process stops. Therefore the only thing we check here is that finding regions of coalescing index does indeed stop eventually. Whilst this does not directly prove an index bound, it does indeed prove a qualitative total curvature bound, thus implying an index bound \emph{a-posteriori} by the work of Cheng-Tysk \cite{CT94}. 

\begin{proof}[Proof of Theorem \ref{cor:lambda}]
The compactness theorem in \cite{ACS15} gives us a limiting $M$ whereby a subsequence of the $\{M_k\}$'s converge smoothly and graphically away from a finite set (which we again denote by $\mathcal{Y}$). Thus, we can begin finding point-scale sequences exactly as in Section \ref{sect.bubbling} before. 

\textbf{Claim:} The process of finding point-scale sequences stops after at most $p-1$ iterations. 

To prove the claim, we start with a blow-up argument -- even if we have not found all of the point-scale sequences. Looking back over the proof in Section \ref{sect.bubbling} we see that each time we perform a blow up we construct a new disjoint region where $\lambda_1(M_k \cap B_{2r^i_k}(p^i_k)\setminus U_k)\to -\infty$. Moreover, by Corollary \ref{nontriv} all of the ignored point-scale sequences also contribute a separate region with this property. Thus we can find at most $p-1$ such point-scale sequences and the process stops (see \cite[Lemma 3.1]{ACS15}). This proves the claim.

Due to the above claim, we can now follow precisely the same procedure as in the proof of Theorem \ref{thm:main}, yielding a proof of Theorem \ref{cor:lambda} as well.
\end{proof}

\begin{proof}[Proof of Corollary \ref{cor:Betti}]
All the statements in this corollary are immediate from Theorem \ref{thm:main}, except perhaps that $b_i(M)\leq C$. This last statement could follow from the finiteness of diffeomorphism types result, however one can also use the total curvature bound directly which we describe here. When $n=2$, this is an exercise in applying the Gauss equations followed by the Gauss-Bonnet theorem. 

When $n\geq 3$, we first consider the case that $M$ is an orientable minimal hypersurface at which point the Bochner formula of Savo \cite{S14} states that 
\begin{equation*}
\Delta_p = \D^{\ast}\D - \mathscr{B}^{[p]}.
\end{equation*}
Here, $\Delta_p$ is the $p^{th}$ Hodge-Laplacian, $\D^{\ast} \D$ is the rough Laplacian in the bundle of $p-$forms and $\mathscr{B}^{[p]} = -\mathscr{B}_N^{[p]} + A^{[p]}$ is a self-adjoint operator in this bundle. In particular, when the eigenvalues $\sigma_N$ of the curvature operator $Rm_N$ of $N$ are bounded between $\gamma \leq \sigma_N \leq \Gamma$, we have 
\begin{equation*}
p(n-p)\gamma |\omega|^2\leq \langle\mathscr{B}_N^{[p]}(\omega), \omega\rangle \leq p(n-p)\Gamma|\omega|^2.
\end{equation*}
Furthermore, 
\begin{equation*}
\langle A^{[p]}(\omega), \omega\rangle  \twopartdef{=\frac{1}{2}|A|^2|\omega|^2}{n=2, p=1}{ \leq \min\{p,n-p\}|A|^2|\omega|^2}{n\geq 3.}
\end{equation*}
See \cite{S14} for more details. We now apply Theorem 4 of Cheng-Tysk \cite{CT94} which states that, when $M$ is minimally immersed, we have 
\begin{align*}
b_i(M) &\leq C(N) {n\choose{p}}\int_M \big(\max\{1, \min\{p,n-p\}|A|^2- p(n-p) \gamma\} \big)^{n/2} \id V_M\\
&\leq C(N)(\h^n(M) + \mathcal{A}(M)).
\end{align*}
When $M$ is non-orientable, simply consider the orientable double cover as a minimal immersion and we obtain the same estimate (the Betti numbers of non-orientable manifolds are estimated from above by the Betti numbers of their orientable double covers). 
\end{proof}

\begin{remark}
We point out here a consequence of Savo's Bochner formula above. For simplicity we work now with $n=2$ and $M$ orientable. If the eigenvalues of the curvature operator $Rm_N$ are all bounded from below by some $\gamma >0$ then assuming 
\begin{equation}\label{last}
-\langle \mathscr{B}_N^{[1]}(\omega), \omega \rangle + \langle A^{[1]}(\omega), \omega\rangle < 0 
\end{equation}
implies that $\Delta_1$ is strictly positive definite and thus $M$ is a sphere. Without assuming that $M$ is orientable, by using the orientable double cover, we can also conclude that \eqref{last} implies that $M$ is a sphere or a projective plane. In particular, if 
$$|A|^2 <  2\gamma$$ everywhere then \eqref{last} holds. Thus, any minimal surface with non-positive Euler characteristic in $N^3$, with $\gamma >0$, must have $|A|^2 \geq 2\gamma$ somewhere. In particular we recover \cite[Corollary 5.3.2]{S68} and generalise this result when $n=3$. 

Similar statements can be made in higher dimensions except we have to replace sphere with homology sphere.   
\end{remark}

\begin{proof}[Proof of Corollary \ref{cor:equivalence}]
This follows easily by the fact that we end up with an \emph{a-priori} total curvature bound from Theorem \ref{cor:lambda} and then the result of Cheng-Tysk \cite{CT94} finishes the proof. 
\end{proof}

\makeatletter
\def\@listi{%
  \itemsep=0pt
  \parsep=1pt
  \topsep=1pt}
\makeatother
{\fontsize{10}{11}\selectfont

\vspace{10mm}

Reto Buzano (M\"uller)\\
{\sc School of Mathematical Sciences, Queen Mary University of London, London E1 4NS, United Kingdom}\\

Ben Sharp\\
{\sc Centro di Ricerca Matematica Ennio De Giorgi, Scuola Normale Superiore, 56100 Pisa, Italy}\\

\end{document}